\newtheorem{theorem}{Theorem}[section]
\theoremstyle{plain}
\newtheorem{corollary}[theorem]{Corollary}
\newtheorem{lemma}{Lemma}[section]
\numberwithin{equation}{section}
\begin{document}
\title[Geometric Properties of Bessel function derivatives]{Geometric
Properties of Bessel function derivatives}
\author{Erhan Deniz}
\address{Department of Mathematics, Faculty of Science and Letters, Kafkas
University, Kars, Turkey}
\email{edeniz36@gmail.com (E. Deniz), topkaya.sercan@hotmail.com (S.
Topkaya), mcaglar25@gmail.com (M. \c{C}a\u{g}lar)}
\author{Sercan Topkaya}
\author{Murat \c{C}a\u{g}lar}
\keywords{Normalized Bessel functions of the fist kind, convex functions,
starlike functions, zeros of Bessel function derivatives, Radius.\\
{\indent\textrm{2010 }}\ \textit{Mathematics Subject Classification:}
Primary 33C10, Secondary 30C45.}

\begin{abstract}
In this paper our aim is to find the radii of starlikeness and convexity of
Bessel function derivatives for three different kind of normalization. The
key tools in the proof of our main results are the Mittag-Leffler expansion
for $n$th derivative of Bessel function and properties of real zeros of it.
In addition, by using the Euler-Rayleigh inequalities we obtain some tight
lower and upper bounds for the radii of starlikeness and convexity of order
zero for the normalized $n$th derivative of Bessel function. The main
results of the paper are natural extensions of some known results on
classical Bessel functions of the first kind.
\end{abstract}

\maketitle

\section{Introduction}

Denote by $\mathbb{D}_{r}=\left\{ {z\in \mathbb{C}:\;\left\vert z\right\vert
<r}\right\} \quad (r>0)$ the disk of radius $r$ and let $\mathbb{D}=\mathbb{D%
}_{1}.$ Let $\mathcal{A}$ be the class of analytic functions $f$ in the open
unit disk $\mathbb{D}$ which satisfy the usual normalization conditions $%
f(0)={f}^{\prime }(0)-1=0.$ Traditionally, the subclass of $\mathcal{A}$
consisting of univalent functions is denoted by $\mathcal{S}.$ We say that
the function $f$ $\in \mathcal{A}$ is starlike in the disk $\mathbb{D}_{r}$
if $f$ is univalent in $\mathbb{D}_{r}$, and $f(\mathbb{D}_{r})$ is a
starlike domain in $%
\mathbb{C}
$ with respect to the origin. Analytically, the function $f$ is starlike in $%
\mathbb{D}_{r}$ if and only if $\func{Re}\left( \frac{zf^{\prime }(z)}{f(z)}%
\right) >0,$ $z\in \mathbb{D}_{r}.$ For $\beta \in \lbrack 0,1)$ we say that
the function $f$ is starlike of order $\beta $ in $\mathbb{D}_{r}$ if and
only if $\func{Re}\left( \frac{zf^{\prime }(z)}{f(z)}\right) >\beta ,$ $z\in 
\mathbb{D}_{r}.$ We define by the real number%
\begin{equation*}
r_{\beta }^{\ast }(f)=\sup \left\{ r\in \left( 0,r_{f}\right) :\func{Re}%
\left( \frac{zf^{\prime }(z)}{f(z)}\right) >\beta \;\text{for all }z\in 
\mathbb{D}_{r}\right\}
\end{equation*}%
the radius of starlikeness of order $\beta $ of the function $f$. Note that $%
r^{\ast }(f)=r_{0}^{\ast }(f)$ is the largest radius such that the image
region $f(\mathbb{D}_{r_{\beta }^{\ast }(f)})$ is a starlike domain with
respect to the origin.

The function $f$ $\in \mathcal{A}$ is convex in the disk $\mathbb{D}_{r}$ if 
$f$ is univalent in $\mathbb{D}_{r}$, and $f(\mathbb{D}_{r})$ is a convex
domain in $%
\mathbb{C}
$. Analytically, the function $f$ is convex in $\mathbb{D}_{r}$ if and only
if $\func{Re}\left( 1+\frac{zf^{\prime \prime }(z)}{f^{\prime }(z)}\right)
>0,$ $z\in \mathbb{D}_{r}.$ For $\beta \in \lbrack 0,1)$ we say that the
function $f$ is convex of order $\beta $ in $\mathbb{D}_{r}$ if and only if $%
\func{Re}\left( 1+\frac{zf^{\prime \prime }(z)}{f^{\prime }(z)}\right)
>\beta ,$ $z\in \mathbb{D}_{r}.$ The radius of convexity of order $\beta $
of the function $f$ is defined by the real number%
\begin{equation*}
r_{\beta }^{c}(f)=\sup \left\{ r\in \left( 0,r_{f}\right) :\func{Re}\left( 1+%
\frac{zf^{\prime \prime }(z)}{f^{\prime }(z)}\right) >\beta \;\text{for all }%
z\in \mathbb{D}_{r}\right\} .
\end{equation*}%
Note that $r^{c}(f)=r_{0}^{c}(f)$ is the largest radius such that the image
region $f(\mathbb{D}_{r_{\beta }^{c}(f)})$ is a convex domain.

The Bessel function of the first kind of order $\nu $ is defined by \cite[p.
217]{Olver} 
\begin{equation*}
J_{\nu }(z)=\sum_{m=0}^{\infty }\frac{\left( -1\right) ^{m}}{m!\Gamma (m+\nu
+1)}\left( \frac{z}{2}\right) ^{2m+\nu },\text{ \ \ }z\in 
\mathbb{C}
.
\end{equation*}%
Now, we consider the $n$th derivative of Bessel function of the first kind,
by%
\begin{equation*}
J_{\nu }^{(n)}(z)=\sum_{m=0}^{\infty }\frac{\left( -1\right) ^{m}\Gamma
(2m+\nu +1)}{m!2^{n}\Gamma (2m-n+\nu +1)\Gamma (m+\nu +1)}\left( \frac{z}{2}%
\right) ^{2m-n+\nu },\text{ \ \ }z\in 
\mathbb{C}
.
\end{equation*}

Here, it is important mentioning that for $n=0$ the $J_{\nu }^{(n)}$ reduce
to classical Bessel function $J_{\nu }$. Since the function $J_{\nu }^{(n)}$
is not belongs to $\mathcal{A}$, first we form some natural normalizations.
In this paper we focus on the following normalized forms%
\begin{eqnarray}
f_{\nu ,n}(z) &=&\left[ 2^{\nu }\Gamma (\nu -n+1)J_{\nu }^{(n)}(z)\right] ^{%
\frac{1}{\nu -n}},  \label{F1} \\
g_{\nu ,n}(z) &=&2^{\nu }\Gamma (\nu -n+1)z^{1+n-\nu }J_{\nu }^{(n)}(z), 
\notag \\
h_{\nu ,n}(z) &=&2^{\nu }\Gamma (\nu -n+1)z^{1+\frac{n-\nu }{2}}J_{\nu
}^{(n)}(\sqrt{z})  \notag
\end{eqnarray}%
where $\nu >n-1.$

The first studies on geometric properties of Bessel functions of first kind
was conducted in 1960 by Brown, Kreyszig and Todd \cite{Br,Kr}. They
determined the radius of starlikeness of the functions $f_{\nu ,0}(z)$ and $%
g_{\nu ,0}(z)$ for the case $\nu >0$. Recently, in 2014, Baricz et al. \cite%
{Baricz} and Baricz and Sz\'{a}sz \cite{Ba1} obtained, respectively, the
radius of starlikeness of order $\beta $ and the radius of convexity of
order $\beta $ for the functions $f_{\nu ,0}(z),$ $g_{\nu ,0}(z)$ and $%
h_{\nu ,0}(z)$ in the case when $\nu >-1.$ On the other hand, we know that
if $\nu \in (-2,-1),$ then the Bessel function has exactly two purely
imaginary conjugate complex zeros, and all the other zeros are real \cite[%
p.483]{Wat}. In 2015 , Sz\'{a}sz \cite{Sza} investigated the radius of
starlikeness of order $\beta $ for the functions $g_{\nu }(z)$ and $h_{\nu
}(z)$ in the case when $\nu \in (-2,-1)$ by using some inequalities. In the
same year, Baricz and Sz\'{a}sz \cite{Ba3} obtained the radius of convexity
of order $\beta $ for the functions $g_{\nu }(z)$ and $h_{\nu }(z)$ in the
case when $\nu \in (-2,-1).$ Later, in 2016, Baricz et al. \cite{Ba21}
determined the radius of $\alpha -$convexity of the same three functions for 
$\nu >-1$. After a year, \c{C}a\u{g}lar et al. \cite{Ca} extended it for the
case when $\nu \in (-2,-1)$. In 2017, Deniz and Sz\'{a}sz \cite{De}
determined the radius of uniform convexity of $f_{\nu ,0}(z),$ $g_{\nu
,0}(z) $ and $h_{\nu ,0}(z)$ for $\nu >-1$. They also determined necessary
and sufficient conditions on the parameters of these three normalized
functions such that they are uniformly convex in the unit disk. Moreover, in 
\cite{Ak, Ak1} authors determined tight lower and upper bounds for the radii
of starlikeness and convexity of the functions $g_{\nu ,0}(z)$ and $h_{\nu
,0}(z).$ The key tools in their proofs were some new Mittag-Leffler
expansions for quotients of Bessel functions of the first kind, special
properties of the zeros of Bessel functions of the first kind and their
derivatives, Euler-Rayleigh inequalities and the fact that the smallest
positive zeros of some Dini functions are less than the first positive zero
of the Bessel function of first kind.

Another study on Bessel functions investigate the properties of derivatives
and the zeros of these derivatives. In the last three decades the zeros of
the $n$th derivative of Bessel functions of the first kind for $n\in
\{1,2,3\}$ have been also studied by researchers like Elbert, Ifantis,
Ismail, Kokologiannaki, Laforgia, Landau, Lorch, Mercer, Muldoon,
Petropoulou, Siafarikas and Szeg\"{o}; for more details see the papers \cite%
{IS,KO} and the references therein. Very recently in 2018, Baricz et al. 
\cite{Ba4} obtained some results for the zeros of the $n$th derivative of
Bessel functions of the first kind for all $n\in 
\mathbb{N}
$ by using the Laguerre-P\'{o}lya class of entire functions and the
so-called Laguerre inequalities.

Motivated by the above results in this paper, we deal with the radii of
starlikeness and convexity of order $\beta $ for the functions $f_{\nu
,n}(z),\;g_{\nu ,n}(z)$ and $h_{\nu ,n}(z)$ in the case when $\nu >n-1$ for $%
n\in 
\mathbb{N}
.$ Also we determined tight lower and upper bounds for the radii of
starlikeness and convexity of these functions.

\section{Preliminaries}

In order to prove the main results we need the following preliminary results.

\begin{lemma}
\label{l1}\cite{Ba4} The following assertions are valid:

\begin{enumerate}
\item[a.] If $\nu >n-1,$ then $z\mapsto J_{\nu }^{(n)}(z)$ has infinitely
many zeros, which are all real and simple, expect the origin.

\item[b.] If $\nu >n,$ then the positive zeros of the $n$th and $(n+1)$th
derivative of $J_{\nu }$ are interlacing.

\item[c.] If $\nu >n-1,$ then all zeros of $z\mapsto (n-\nu )J_{\nu
}^{(n)}(z)+zJ_{\nu }^{(n+1)}(z)$ are real and interlace with the zeros of $%
z\mapsto J_{\nu }^{(n)}(z).$
\end{enumerate}
\end{lemma}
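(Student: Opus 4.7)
The plan is to reduce all three assertions to properties of a single even entire function
\begin{equation*}
\psi _{\nu ,n}(t)=\sum_{m=0}^{\infty }\frac{(-1)^{m}\Gamma (2m+\nu +1)}{m!\,\Gamma (2m-n+\nu +1)\,\Gamma (m+\nu +1)}\left( \frac{t}{4}\right) ^{m},
\end{equation*}
defined so that $2^{\nu }J_{\nu }^{(n)}(z)=z^{\nu -n}\psi _{\nu ,n}(z^{2})$. The non-zero zeros of $J_{\nu }^{(n)}$ then correspond bijectively, via $t\mapsto \pm \sqrt{t}$, to the zeros of $\psi _{\nu ,n}$, so the whole argument reduces to placing $\psi _{\nu ,n}$ inside the Laguerre--P\'olya class with only positive real zeros.

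For part (a) I would induct on $n$. The base case $n=0$ is the classical statement that $z\mapsto z^{-\nu }J_{\nu }(z)$, viewed as a function of $z^{2}$, is an entire function of genus $0$ whose zeros are the positive numbers $j_{\nu ,k}^{2}$, hence in the Laguerre--P\'olya class for $\nu >-1$. For the inductive step, differentiating the identity $2^{\nu }J_{\nu }^{(n)}(z)=z^{\nu -n}\psi _{\nu ,n}(z^{2})$ gives the recurrence
\begin{equation*}
\psi _{\nu ,n+1}(t)=(\nu -n)\psi _{\nu ,n}(t)+2t\,\psi _{\nu ,n}^{\prime }(t),
\end{equation*}
so the inductive step amounts to showing that the operator $f\mapsto (\nu -n)f+2tf^{\prime }$ preserves the Laguerre--P\'olya class when $\nu >n-1$. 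This closure is the main obstacle; I would attack it either by combining the Hadamard product of $\psi _{\nu ,n}$ with the sign of its logarithmic derivative on the negative axis, or by verifying the Laguerre inequalities that characterize the class (the route followed in \cite{Ba4}). Simplicity of the non-zero zeros then follows from the fact that $J_{\nu }$ satisfies a second-order linear ODE: a double zero of $J_{\nu }^{(n)}$ would, by successive differentiation of the ODE, force $J_{\nu }$ to vanish identically.

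For part (b), granted (a), both $J_{\nu }^{(n)}$ and $J_{\nu }^{(n+1)}$ have only real simple non-zero zeros when $\nu >n$, so Rolle's theorem places at least one positive zero of $J_{\nu }^{(n+1)}$ between any two consecutive positive zeros of $J_{\nu }^{(n)}$. Strict interlacing is then secured by comparing the common asymptotic spacing $\sim \pi $ of the two sequences, which follows from the standard asymptotic expansions for zeros of derivatives of $J_{\nu }$.

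For part (c), a short calculation using the same recurrence yields
\begin{equation*}
(n-\nu )J_{\nu }^{(n)}(z)+zJ_{\nu }^{(n+1)}(z)=2^{1-\nu }\,z^{\nu -n+2}\,\psi _{\nu ,n}^{\prime }(z^{2}).
\end{equation*}
Since $\psi _{\nu ,n}$ lies in the Laguerre--P\'olya class by (a), Laguerre's classical theorem on closure under differentiation places $\psi _{\nu ,n}^{\prime }$ in the same class, so its zeros are real and positive. Rolle's theorem forces them to interlace with the zeros of $\psi _{\nu ,n}$, and pulling back via $t=z^{2}$ produces the claimed interlacing between the zeros of $(n-\nu )J_{\nu }^{(n)}+zJ_{\nu }^{(n+1)}$ and those of $J_{\nu }^{(n)}$. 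The whole chain of arguments thus rests on the Laguerre--P\'olya membership of $\psi _{\nu ,n}$, which is where the real work lies.
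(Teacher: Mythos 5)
This lemma is not proved in the paper at all: it is quoted verbatim from Baricz--Kokologiannaki--Pog\'any \cite{Ba4}, so there is no in-text argument to compare yours against. Judged on its own terms, your outline picks the right reduction (the substitution $2^{\nu}J_{\nu}^{(n)}(z)=z^{\nu-n}\psi_{\nu,n}(z^{2})$, the recurrence $\psi_{\nu,n+1}=(\nu-n)\psi_{\nu,n}+2t\psi_{\nu,n}'$, and the identity $(n-\nu)J_{\nu}^{(n)}(z)+zJ_{\nu}^{(n+1)}(z)=2^{1-\nu}z^{\nu-n+2}\psi_{\nu,n}'(z^{2})$ are all correct, and part (c) does follow cleanly from Laguerre's theorem and Rolle once part (a) is in hand). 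But the proposal is not a proof: you explicitly leave open the one step that carries all the weight, namely that $f\mapsto cf+2tf'$ with $c=\nu-n>0$ preserves the Laguerre--P\'olya class with positive zeros. That step is exactly what the paper's own Lemmas \ref{Le1} and \ref{Le2} are designed to supply (write $cf+2tf'=-2\left(Cf-tf'\right)$ with $C=-c/2$, pass to Jensen polynomials, apply the hyperbolic-polynomial lemma, and pass back to the limit); this is the same mechanism the authors use to prove Lemma \ref{Le3}, and without it your induction does not close.

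Two further steps are asserted but do not work as stated. First, simplicity in (a): differentiating the Bessel equation $n$ times does not give a relation among $y^{(n)},y^{(n+1)},y^{(n+2)}$ alone --- Leibniz produces terms in $y^{(n-1)}$ and $y^{(n-2)}$ as well --- so a common zero of $J_{\nu}^{(n)}$ and $J_{\nu}^{(n+1)}$ does not "force $J_{\nu}\equiv 0$" by any immediate ODE uniqueness argument; simplicity has to come from the interlacing/counting structure (e.g., exactly one zero of the derivative between consecutive zeros), not from the ODE. Second, strict interlacing in (b): Rolle gives \emph{at least} one zero of $J_{\nu}^{(n+1)}$ in each gap, but appealing to "the common asymptotic spacing $\sim\pi$" does not by itself exclude extra zeros in any finite window, nor does it account for the zero of $J_{\nu}^{(n+1)}$ in $(0,j_{\nu,1}^{(n)})$ that exists when $\nu>n$; you need an actual zero count (for instance via the Hadamard products of $\psi_{\nu,n}$ and $\psi_{\nu,n+1}$ and the monotonicity of $\psi_{\nu,n}'/\psi_{\nu,n}$ between consecutive zeros). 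As it stands the proposal is a plausible roadmap with the decisive lemmas still unproven.
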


We will also need the following result, see \cite{Bi,Po}:

\begin{lemma}
\label{l11} Consider the power series $f(x)=\sum_{n=0}^{\infty }a_{n}x^{n}$
and $g(x)=\sum_{n=0}^{\infty }b_{n}x^{n}$, where $a_{n}\in 
\mathbb{R}
$ and $b_{n}>0$ for all $n\geq 0$. Suppose that both series converge on $%
(-r,r)$, for some $r>0$. If the sequence $\{a_{n}\diagup b_{n}\}_{n\geq 0}$
is increasing (decreasing), then the function $x\rightarrow f(x)\diagup g(x)$
is increasing (decreasing) too on $(0,r)$. The result remains true for the
power series%
\begin{equation*}
f(x)=\sum_{n=0}^{\infty }a_{n}x^{2n}\text{ and }g(x)=\sum_{n=0}^{\infty
}b_{n}x^{2n}.
\end{equation*}
\end{lemma}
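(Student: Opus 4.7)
The plan is to reduce monotonicity of the ratio $f/g$ on $(0,r)$ to positivity of a single power series, namely the numerator of $(f/g)'$, and then verify that positivity term by term using a symmetric pairing argument.

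First I would observe that since $b_n>0$ and we restrict to $x\in(0,r)$, we have $g(x)=\sum b_n x^n>0$, so the sign of $(f/g)'=(f'g-fg')/g^2$ agrees with the sign of $f'g-fg'$. Hence it suffices to show that the power series of $f'g-fg'$ has nonnegative coefficients (for the increasing case).

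Next I would compute the Cauchy product. Writing $f'(x)=\sum_{i\ge 1} i a_i x^{i-1}$ and $g'(x)=\sum_{j\ge 1} j b_j x^{j-1}$, a direct expansion yields
\begin{equation*}
f'(x)g(x)-f(x)g'(x)=\sum_{n\ge 0}c_n x^n,\qquad c_n=\sum_{i+j=n+1}(i-j)\,a_i b_j.
\end{equation*}
Now I would pair the term with indices $(i,j)$ against the term with indices $(j,i)$ (both contributing to the same $c_n$), which produces
\begin{equation*}
c_n=\sum_{\substack{i+j=n+1\\ i<j}}(i-j)\bigl(a_i b_j-a_j b_i\bigr).
\end{equation*}
Under the hypothesis that $\{a_n/b_n\}$ is increasing, one has $a_i/b_i\le a_j/b_j$ whenever $i<j$, equivalently $a_i b_j-a_j b_i\le 0$ (since $b_i,b_j>0$). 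Combined with $i-j<0$ in each pair, every summand is nonnegative, so $c_n\ge 0$. Thus $f'g-fg'\ge 0$ on $(0,r)$, proving $f/g$ is increasing. The decreasing case follows by reversing all inequalities (or applying the increasing case to $-f$).

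For the final assertion with only even powers, I would simply set $y=x^2$ and apply the already established statement to $F(y)=\sum a_n y^n$ and $G(y)=\sum b_n y^n$ on $(0,r^2)$, observing that the substitution $y=x^2$ is an increasing bijection from $(0,r)$ to $(0,r^2)$, so monotonicity of $F/G$ in $y$ transfers to monotonicity of $f/g$ in $x$. The only subtle step is verifying the sign bookkeeping in the pairing argument; once the double sum is symmetrized this becomes routine, so I do not expect a real obstacle.
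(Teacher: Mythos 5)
Your argument is correct and is essentially the standard proof of this result of Biernacki--Krzy\.{z} and Ponnusamy--Vuorinen: positivity of $g$ on $(0,r)$ reduces the claim to the sign of $f'g-fg'$, the Cauchy product gives $c_n=\sum_{i+j=n+1}(i-j)a_ib_j$, and symmetrizing over $(i,j)\leftrightarrow(j,i)$ shows each $c_n\geq 0$ under the monotonicity hypothesis, with the even-power case following from the substitution $y=x^2$. The paper itself states this lemma without proof, citing the references, so there is nothing to compare against; your write-up fills that gap correctly, the only cosmetic caveat being that nonnegativity of the $c_n$ yields monotonicity in the weak sense unless some inequality $a_ib_j<a_jb_i$ is strict.
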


\subsection{Zeros of hyperbolic polynomials and the Laguerre--P\'{o}lya
class of entire functions}

In this subsection, we recall some necessary information about polynomials
and entire functions with real zeros. An algebraic polynomial is called
hyperbolic if all its zeros are real. We formulate the following specific
statement that we shall need, see \cite{Ba2} for more details.

\begin{lemma}
\label{Le1} Let $p(x)=1-a_{1}x+a_{2}x^{2}-a_{3}x^{3}+\cdots
+(-1)^{n}a_{n}x^{n}=(1-x/x_{1})\cdots (1-x/x_{n})$ be a hyperbolic
polynomial with positive zeros $0<x_{1}\leq x_{2}\leq \cdots \leq x_{n},$
and normalized by $p(0)=1.$ Then, for any constant $C,$ the polynomial $%
q(x)=Cp(x)-xp^{\prime }(x)$ is hyperbolic. Moreover, the smallest zero $\eta
_{1\text{ }}$belongs to the interval $(0,x_{1})$ if and only if $C<0.$
\end{lemma}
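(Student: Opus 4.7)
My approach is to analyze the real zeros of $q$ through the partial fraction expansion of the logarithmic derivative of $p$. Since $p(x)=\prod_{k=1}^{n}(1-x/x_{k})$ with positive zeros counted according to multiplicity, one has
\begin{equation*}
\psi(x):=\frac{xp^{\prime}(x)}{p(x)}=\sum_{k=1}^{n}\frac{x}{x-x_{k}},
\end{equation*}
so that the real zeros of $q(x)=Cp(x)-xp^{\prime}(x)$ lying outside the zero set of $p$ correspond bijectively to solutions of $\psi(x)=C$. Differentiating yields $\psi^{\prime}(x)=-\sum_{k=1}^{n}x_{k}/(x-x_{k})^{2}<0$, so $\psi$ is strictly decreasing on each connected component of its domain, with $\psi(x)\to n$ as $x\to\pm\infty$ and $\psi(x)\to -\infty$, $+\infty$ as $x$ approaches any zero of $p$ from the left and from the right, respectively.

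Writing $\xi_{1}<\cdots<\xi_{s}$ for the distinct values among the $x_{k}$'s with multiplicities $m_{j}$, I would then count zeros component by component. A local expansion of $q$ at $\xi_{j}$ shows that $\xi_{j}$ is a zero of $q$ of multiplicity precisely $m_{j}-1$, contributing $n-s$ real zeros in total. On each of the $s-1$ bounded components $(\xi_{j},\xi_{j+1})$ the function $\psi$ decreases continuously from $+\infty$ to $-\infty$, so $\psi(x)=C$ has exactly one solution there by the intermediate value theorem, producing $s-1$ additional real zeros. The remaining zero comes from an unbounded component: the range of $\psi$ is $(-\infty,n)$ on $(-\infty,\xi_{1})$ and $(n,+\infty)$ on $(\xi_{s},+\infty)$, so if $C<n$ one picks up a single zero in $(-\infty,\xi_{1})$ and if $C>n$ a single zero in $(\xi_{s},+\infty)$; in the borderline case $C=n$ the leading coefficients of $Cp(x)$ and $xp^{\prime}(x)$ cancel so that $\deg q=n-1$, and the previous $n-1$ zeros already exhaust $q$. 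In every case all zeros of $q$ are real, so $q$ is hyperbolic.

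For the final claim, I would use $\psi(0)=0$ together with the strict decrease of $\psi$ on $(-\infty,x_{1})=(-\infty,\xi_{1})$: this gives $\psi(x)>0$ for $x<0$ and $\psi(x)<0$ for $x\in(0,x_{1})$. Consequently, the unique zero of $q$ contributed by this component lies in $(0,x_{1})$ if and only if $C<0$, and since every other real zero of $q$ located above sits in $[x_{1},+\infty)$, this zero must be the smallest, namely $\eta_{1}$. Conversely, for $C\geq 0$ the same sign analysis shows $q$ has no zero in $(0,x_{1})$. The step I expect to require the most care is the bookkeeping when some of the $x_{k}$'s coincide: one has to justify that the multiplicity at each $\xi_{j}$ drops by exactly one and that the resulting counts sum to $\deg q$ consistently in the generic ($C\neq n$) and degenerate ($C=n$) regimes; once this is in place the monotonicity and IVT argument is routine.
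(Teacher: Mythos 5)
The paper states this lemma without proof, simply citing \cite{Ba2}, where it is established by essentially the same logarithmic-derivative argument you use; your write-up is correct, including the multiplicity bookkeeping at repeated zeros (the local factorization shows the order drops by exactly one at each $\xi_j$) and the degenerate case $C=n$, where cancellation gives a priori only $\deg q\le n-1$ but the $n-1$ real zeros you exhibit force equality. The characterization of when $\eta_1\in(0,x_1)$ via $\psi(0)=0$ and the strict decrease of $\psi$ on $(-\infty,x_1)$ is likewise complete, so there is nothing to fix.
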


By definition, a real entire function $\psi $ belongs to the Laguerre--P\'{o}%
lya class $\mathcal{LP}$ if it can be represented in the form%
\begin{equation*}
\psi (x)=cx^{m}e^{-ax^{2}+\beta x}\dprod\limits_{k\geq 1}\left( 1+\frac{x}{%
x_{k}}\right) e^{-\frac{x}{x_{k}}},
\end{equation*}%
with $c,\beta ,x_{k}\in 
\mathbb{R}
,$ $a\geq 0,$ $m\in 
\mathbb{N}
\cup \{0\}$ and $\sum x_{k}^{-2}<\infty .$ Similarly, $\phi $ is said to be
of type $\mathcal{I}$ in the Laguerre-P\'{o}lya class, written $\varphi \in 
\mathcal{LPI}$, if $\phi (x)$ or $\phi (-x)$ can be represented as%
\begin{equation*}
\phi (x)=cx^{m}e^{\sigma x}\dprod\limits_{k\geq 1}\left( 1+\frac{x}{x_{k}}%
\right) ,
\end{equation*}%
with $c\in 
\mathbb{R}
,$ $\sigma \geq 0,$ $m\in 
\mathbb{N}
\cup \{0\},$ $x_{k}>0$ and $\sum x_{k}^{-1}<\infty .$ The class $\mathcal{LP}
$ is the complement of the space of hyperbolic polynomials in the topology
induced by the uniform convergence on the compact sets of the complex plane
while $\mathcal{LPI}$ is the complement of the hyperbolic polynomials whose
zeros possess a preassigned constant sign. Given an entire function $\varphi 
$ with the Maclaurin expansion%
\begin{equation*}
\varphi (x)=\sum_{k\geq 0}\mu _{k}\frac{x^{k}}{k!},
\end{equation*}%
its Jensen polynomials are defined by 
\begin{equation*}
P_{m}(\varphi ;x)=P_{m}(x)=\sum_{k=0}^{m}\left( 
\begin{array}{c}
m \\ 
k%
\end{array}%
\right) \mu _{k}x^{k}.
\end{equation*}%
The next result of Jensen \cite{Je} is a well-known characterization of
functions belonging to $\mathcal{LP}$.

\begin{lemma}
\label{Le2} The function $\varphi $ belongs to $\mathcal{LP}$ $(\mathcal{LPI}
$, respectively$)$ if and only if all the polynomials $P_{m}(\varphi ;x)$, $%
m=1,2,...,$ are hyperbolic (hyperbolic with zeros of equal sign). Moreover,
the sequence $P_{m}(\varphi ;z\diagup n)$ converges locally uniformly to $%
\varphi (z)$.
\end{lemma}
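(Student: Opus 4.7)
The plan is to establish the ``moreover'' convergence claim first, and then address the two directions of the characterization separately.

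For the convergence of $P_{m}(\varphi;z/m)$ to $\varphi(z)$, I would observe that
\[
P_{m}(\varphi;z/m)=\sum_{k=0}^{m}\binom{m}{k}\mu_{k}\,\frac{z^{k}}{m^{k}}=\sum_{k=0}^{m}\frac{m(m-1)\cdots(m-k+1)}{m^{k}}\,\frac{\mu_{k}z^{k}}{k!}.
\]
For each fixed $k$ the rational prefactor tends to $1$ as $m\to\infty$ and is bounded by $1$, so a dominated-convergence argument on any compact disk---exploiting the absolute convergence of the Maclaurin series of $\varphi$---yields locally uniform convergence to $\sum_{k\geq 0}\mu_{k}z^{k}/k!=\varphi(z)$.

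For the easy (``if'') direction, assume each Jensen polynomial $P_{m}(\varphi;x)$ is hyperbolic (respectively, hyperbolic with zeros of a fixed sign). The rescaled polynomials $Q_{m}(z):=P_{m}(\varphi;z/m)$ share the same zero structure up to dilation and converge locally uniformly to $\varphi$ by the previous step. By Hurwitz's theorem every non-zero limit point of the zero sets of the $Q_{m}$ must then be real (and of the prescribed sign); combined with the classical fact that $\mathcal{LP}$ (respectively $\mathcal{LPI}$) coincides with the closure of the set of hyperbolic polynomials (respectively hyperbolic polynomials with zeros of one sign) in the topology of locally uniform convergence on $\mathbb{C}$, this forces $\varphi\in\mathcal{LP}$ (respectively $\varphi\in\mathcal{LPI}$).

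The harder ``only if'' direction I would treat by approximation: given $\varphi\in\mathcal{LP}$, truncate its Hadamard--Weierstrass factorization to produce a sequence of hyperbolic polynomials $\varphi_{n}$ converging to $\varphi$ locally uniformly (with zeros of a common sign when $\varphi\in\mathcal{LPI}$). Since the Maclaurin coefficients of $\varphi_{n}$ converge to those of $\varphi$, one has $P_{m}(\varphi_{n};x)\to P_{m}(\varphi;x)$ uniformly on compacta, and a final application of Hurwitz will transfer hyperbolicity through the limit. The problem is therefore reduced to the purely polynomial statement: if $p(x)$ is a hyperbolic polynomial, then $P_{m}(p;x)$ is hyperbolic (with zeros of the same sign as those of $p$ when relevant). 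This is the essential obstacle and it is resolved by the classical Schur--Malo composition theorem: the Jensen construction $p\mapsto P_{m}(p;\,\cdot\,)$ may be viewed as the Hadamard product against the coefficient sequence of $(1+x)^{m}$, a hyperbolic polynomial all of whose zeros equal $-1$, and such a composition preserves real-rootedness (and preserves a common sign of zeros when the second factor has this property). An alternative route, more in the spirit of Lemma~\ref{Le1}, would be induction on the number of linear factors of $p$: verify directly that the Jensen polynomial of a single factor $1+x/x_{0}$ is hyperbolic, and show that multiplication by a further linear hyperbolic factor preserves this property by a differentiation-and-rescaling argument of the kind used in Lemma~\ref{Le1}.
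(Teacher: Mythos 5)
The paper does not prove this lemma at all: it is stated as Jensen's classical characterization with only the citation \cite{Je}, so there is no in-paper argument to compare yours against. Your outline is essentially the standard proof (as in P\'olya--Schur and in Obreschkoff's or Craven--Csordas's accounts) and its structure is sound: Tannery/dominated convergence for the locally uniform convergence of $P_m(\varphi;z/m)$ to $\varphi$; the Laguerre--P\'olya closure theorem for the ``if'' direction; and reduction to the polynomial case plus the Schur composition theorem for the ``only if'' direction. The identification of $P_m(p;\cdot)$ as the Schur composition of $p=\sum(\mu_k/k!)x^k$ with $(1+x)^m=\sum\binom{m}{k}x^k$, namely $\sum k!\,a_kb_kx^k$ with $a_k=\mu_k/k!$ and $b_k=\binom{m}{k}$, is correct, and since $(1+x)^m$ has all zeros at $-1$ the composition theorem applies and also preserves the sign condition (nonnegative $\mu_k$ give nonnegative coefficients of $P_m$, hence nonpositive zeros).

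Two points deserve tightening. First, in the ``if'' direction Hurwitz's theorem only yields that the zeros of $\varphi$ are real, which is strictly weaker than $\varphi\in\mathcal{LP}$ (e.g.\ $e^{x^2}$ has no zeros yet lies outside $\mathcal{LP}$); the step that actually carries the weight is the closure characterization of $\mathcal{LP}$ (resp.\ $\mathcal{LPI}$) as the locally uniform closure of the hyperbolic polynomials (resp.\ those with zeros of one sign), which you do invoke, so the Hurwitz sentence there is redundant rather than wrong. Second, in the ``only if'' direction, merely truncating the Hadamard--Weierstrass product does not produce polynomials, because the factors $e^{\beta x}$, $e^{-ax^2}$ and the convergence factors $e^{-x/x_k}$ survive; one must additionally approximate these by $(1+\beta x/N)^N$, $(1-ax^2/N)^N$, etc., which is again exactly the closure theorem. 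With those emendations, and granting the two classical black boxes (the closure theorem and Schur--Malo) that your proposal cites but does not prove, the argument is complete and matches the standard proof of Jensen's theorem.
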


The following result is a key tool in the proof of main results.

\begin{lemma}
\label{Le3} Let $\nu >n-1$ and $a<0.$ Then the functions $z\longmapsto
(2a-n+\nu )J_{\nu }^{(n)}(z)-zJ_{\nu }^{(n+1)}(z)$ can be represented in the
form%
\begin{equation*}
2^{n-1}\Gamma (\nu +1-n)\left( (2a-n+\nu )J_{\nu }^{(n)}(z)-zJ_{\nu
}^{(n+1)}(z)\right) =\left( \frac{z}{2}\right) ^{\nu -n}W_{\nu ,n}(z)
\end{equation*}%
where $W_{\nu ,n}$ is entire functions belonging to the Laguerre--P\'{o}lya
class $\mathcal{LP}$. Moreover, the smallest positive zero of $W_{\nu ,n}$
does not exceed the first positive zero $j_{\nu ,1}^{(n)}$ where $j_{\nu
,m}^{(n)}$ is the $m$th positive zero of $J_{\nu }^{(n)}(z)$ $(m\in 
\mathbb{N}
,$ $n\in 
\mathbb{N}
_{0})$.
\end{lemma}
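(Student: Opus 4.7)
The first step is to strip off the non-entire prefactor from $J_\nu^{(n)}$. Set $F(z)=z^{n-\nu}J_\nu^{(n)}(z)$; from the series defining $J_\nu^{(n)}$, this is an even entire function. Since $zJ_\nu^{(n+1)}(z)=z\frac{d}{dz}\bigl(z^{\nu-n}F(z)\bigr)=(\nu-n)z^{\nu-n}F(z)+z^{\nu-n+1}F'(z)$, a direct computation yields
\begin{equation*}
(2a-n+\nu)J_\nu^{(n)}(z)-zJ_\nu^{(n+1)}(z)=z^{\nu-n}\bigl[2aF(z)-zF'(z)\bigr],
\end{equation*}
so multiplying by $2^{n-1}\Gamma(\nu+1-n)$ and comparing with the claimed factorization forces $W_{\nu,n}(z)=2^{\nu-1}\Gamma(\nu+1-n)\bigl[2aF(z)-zF'(z)\bigr]$. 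Writing $F(z)=\widehat F(u)$ with $u=z^2$, a quick chain-rule calculation gives $2aF(z)-zF'(z)=2\bigl[a\widehat F(u)-u\widehat F'(u)\bigr]$, hence up to the positive constant $c=2^\nu\Gamma(\nu+1-n)$ one has $W_{\nu,n}(z)=c\,\widehat W(z^2)$, where $\widehat W(u):=a\widehat F(u)-u\widehat F'(u)$.

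By Lemma \ref{l1}(a), every nonzero zero of $J_\nu^{(n)}$ is real; since $F$ is even these zeros appear in pairs $\pm j_{\nu,k}^{(n)}$, so the zeros of $\widehat F$ are exactly the positive numbers $u_k=(j_{\nu,k}^{(n)})^2$. The function $\widehat F$ is of order $1/2$, so $\sum u_k^{-1}<\infty$, the Hadamard factorization reduces to $\widehat F(u)=\widehat F(0)\prod_{k\geq 1}(1-u/u_k)$, and $\widehat F\in\mathcal{LPI}$. Writing $\widehat F(u)=\sum \mu_k u^k/k!$, a one-line termwise check establishes the key identity
\begin{equation*}
P_m(\widehat W;u)=aP_m(\widehat F;u)-u\,\tfrac{d}{du}P_m(\widehat F;u).
\end{equation*}
Lemma \ref{Le2} guarantees that each $P_m(\widehat F;u)$ is hyperbolic with all positive zeros; applying Lemma \ref{Le1} with $C=a<0$ to the normalization $P_m(\widehat F;u)/\mu_0$ then shows $P_m(\widehat W;u)$ is hyperbolic with positive zeros, and that its smallest positive zero $\eta_1^{(m)}$ is strictly smaller than the smallest positive zero $u_1^{(m)}$ of $P_m(\widehat F;u)$. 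Running Lemma \ref{Le2} in reverse yields $\widehat W\in\mathcal{LPI}$, whence $W_{\nu,n}(z)=c\,\widehat W(z^2)\in\mathcal{LP}$.

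The bound on the smallest positive zero then comes from passing to the limit in $\eta_1^{(m)}<u_1^{(m)}$. The local uniform convergence $P_m(\varphi;z/m)\to\varphi(z)$ of Lemma \ref{Le2}, combined with Hurwitz's theorem, implies that $mu_1^{(m)}\to (j_{\nu,1}^{(n)})^2$ and that $m\eta_1^{(m)}$ tends to the smallest positive zero of $\widehat W$; consequently the latter is at most $(j_{\nu,1}^{(n)})^2$, and taking square roots bounds the smallest positive zero of $W_{\nu,n}$ by $j_{\nu,1}^{(n)}$. I expect the most delicate point to be making this limiting argument for the smallest zeros fully rigorous (ruling out migration of zeros towards the origin or across the positive real axis), while the crucial algebraic ingredient that unlocks the proof is the Jensen-polynomial identity above, which is what lets the polynomial-level Lemma \ref{Le1} act on the transcendental function $\widehat W$.
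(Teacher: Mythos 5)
Your proposal is correct and follows essentially the same route as the paper: strip the prefactor to get an even entire function, pass to the variable $u=z^{2}$ (the paper uses $\varsigma=z^{2}/4$, a harmless rescaling) to land in $\mathcal{LPI}$, establish the Jensen-polynomial identity $P_{m}(\widehat W;u)=aP_{m}(\widehat F;u)-uP_{m}'(\widehat F;u)$, and then combine Lemma \ref{Le1} with Lemma \ref{Le2}. You are in fact somewhat more explicit than the paper on two points it glosses over, namely the algebraic derivation identifying $W_{\nu,n}$ and the Hurwitz-type limiting argument transferring the zero comparison from the Jensen polynomials to $\widehat W$ itself.
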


\begin{proof}
It is clear from the infinite product representation of $z\longmapsto 
\mathcal{J}_{\nu }^{(n)}(z)=2^{\nu }\Gamma (\nu +1-n)\left( z\right) ^{n-\nu
}J_{\nu }^{(n)}(z)$\ that this function belongs to $\mathcal{LP}$. This
implies that the function $z\longmapsto \mathbb{J}_{\nu }^{(n)}(z)=\mathcal{J%
}_{\nu }^{(n)}(2\sqrt{z})$ belongs to $\mathcal{LPI}$. Then it follows form
Lemma \ref{Le2} that its Jensen polynomials 
\begin{equation*}
P_{m}(\mathbb{J}_{\nu }^{(n)};\varsigma )=\sum_{k=0}^{m}\left( 
\begin{array}{c}
m \\ 
k%
\end{array}%
\right) \mu _{k}x^{k}
\end{equation*}%
are all hyperbolic. However, observe that the Jensen polynomials of $\overset%
{\sim }{W}_{\nu ,n}(z)=W_{\nu ,n}(2\sqrt{z})$ are simply%
\begin{equation*}
P_{m}\left( \overset{\sim }{W}_{\nu ,n};\varsigma \right) =aP_{m}\left( 
\mathbb{J}_{\nu }^{(n)};\varsigma \right) -\varsigma P_{m}^{\prime }\left( 
\mathbb{J}_{\nu }^{(n)};\varsigma \right) .
\end{equation*}%
Lemma \ref{Le1} implies that all zeros of $P_{m}\left( \overset{\sim }{W}%
_{\nu ,n};\varsigma \right) $ are real and positive and that the smallest
one precedes the first zero of $P_{m}\left( \mathbb{J}_{\nu
}^{(n)};\varsigma \right) $. In view of Lemma \ref{Le2}, the latter
conclusion immediately yields that $\overset{\sim }{W}_{\nu ,n}\in \mathcal{%
LPI}$ and that its first zero precedes $j_{\nu ,1}^{(n)}$. Finally, the
first part of the statement of the lemma follows after we go back from $%
\overset{\sim }{W}_{\nu ,n}$ to $W_{\nu ,n}$ by setting $\varsigma =\frac{%
z^{2}}{4}.$
\end{proof}

\subsection{Euler-Rayleigh Sums for Positive Zeros of $J_{\protect\nu %
}^{(n)}(z)$}

Baricz et al. \cite{Ba4} proved Mittag-Leffler expansion of $J_{\nu
}^{(n)}(z)$ as follows 
\begin{equation}
J_{\nu }^{(n)}(z)=\frac{z^{\nu -n}}{2^{\nu }\Gamma (\nu +1-n)}%
\dprod\limits_{m\geq 1}\left( 1-\frac{z^{2}}{\left( j_{\nu ,m}^{(n)}\right)
^{2}}\right)  \label{r11}
\end{equation}%
where $j_{\nu ,m}^{(n)}$ is the $m$th positive zero of $J_{\nu }^{(n)}(z)$ $%
(m\in 
\mathbb{N}
,$ $n\in 
\mathbb{N}
_{0})$. Therefore we can write 
\begin{eqnarray}
g_{\nu ,n}(z) &=&2^{\nu }\Gamma (\nu -n+1)z^{1+n-\nu }J_{\nu }^{(n)}(z)
\label{S1} \\
&=&z\dprod\limits_{m\geq 1}\left( 1-\frac{z^{2}}{\left( j_{\nu
,m}^{(n)}\right) ^{2}}\right) .  \notag
\end{eqnarray}%
On the other hand, the series representation of $g_{\nu ,n}(z)$%
\begin{equation}
g_{\nu ,n}(z)=\sum_{m=0}^{\infty }\frac{\left( -1\right) ^{m}\Gamma (2m+\nu
+1)\Gamma (\nu -n+1)}{m!4^{m}\Gamma (2m-n+\nu +1)\Gamma (m+\nu +1)}z^{2m+1}.
\label{S2}
\end{equation}

Now, we would like to mention that by using the equations (\ref{S1}) and (%
\ref{S2}) we can obtain the following Euler-Rayleigh sums for the positive
zeros of the function $g_{\nu ,n}$. From the equality (\ref{S2}) we have%
\begin{equation}
g_{\nu ,n}(z)=z-\frac{\nu +2}{4(\nu -n+2)(\nu -n+1)}z^{3}+\frac{(\nu +4)(\nu
+3)}{32(\nu -n+4)(\nu -n+3)(\nu -n+2)(\nu -n+1)}z^{5}-\cdots .  \label{S3}
\end{equation}%
Now, if we consider (\ref{S1}), then some calculations yield that%
\begin{equation}
g_{\nu ,n}(z)=z-\dsum\limits_{m\geq 1}\frac{1}{\left( j_{\nu
,m}^{(n)}\right) ^{2}}z^{3}+\frac{1}{2}\left( \left( \dsum\limits_{m\geq 1}%
\frac{1}{\left( j_{\nu ,m}^{(n)}\right) ^{2}}\right)
^{2}-\dsum\limits_{m\geq 1}\frac{1}{\left( j_{\nu ,m}^{(n)}\right) ^{4}}%
\right) z^{5}-\cdots .  \label{S4}
\end{equation}%
By equating the first few coefficients with the same degrees in equations (%
\ref{S3}) and (\ref{S4}) we get,%
\begin{equation}
\dsum\limits_{m\geq 1}\frac{1}{\left( j_{\nu ,m}^{(n)}\right) ^{2}}=\frac{%
\nu +2}{4(\nu -n+2)(\nu -n+1)}  \label{S5}
\end{equation}%
and%
\begin{equation}
\dsum\limits_{m\geq 1}\frac{1}{\left( j_{\nu ,m}^{(n)}\right) ^{4}}=\frac{1}{%
16(\nu -n+2)(\nu -n+1)}\left( \frac{(\nu +2)^{2}}{(\nu -n+2)(\nu -n+1)}-%
\frac{(\nu +4)(\nu +3)}{(\nu -n+4)(\nu -n+3)}\right) .  \label{S6}
\end{equation}%
Here, it is important mentioning that for $n=0$ the equations (\ref{S5}) and
(\ref{S6}) reduce to%
\begin{equation*}
\dsum\limits_{m\geq 1}\frac{1}{\left( j_{\nu ,m}\right) ^{2}}=\frac{1}{4(\nu
+1)}\text{ \ \ and \ \ }\dsum\limits_{m\geq 1}\frac{1}{\left( j_{\nu
,m}\right) ^{4}}=\frac{1}{16(\nu +2)(\nu +1)^{2}}
\end{equation*}%
respectively, where $j_{\nu ,m}$ denotes the $m$th zero of classical Bessel
function $J_{\nu }$.

Another special case for $n=1,2$ the equations (\ref{S5}) and (\ref{S6})
reduce to%
\begin{equation*}
\dsum\limits_{m\geq 1}\frac{1}{\left( j_{\nu ,m}^{\prime }\right) ^{2}}=%
\frac{\nu +2}{4\nu (\nu +1)}\text{ \ \ and \ \ }\dsum\limits_{m\geq 1}\frac{1%
}{\left( j_{\nu ,m}^{\prime }\right) ^{4}}=\frac{\nu ^{2}+8\nu +8}{16\nu
^{2}(\nu +1)^{2}(\nu +2)}
\end{equation*}%
and%
\begin{equation*}
\dsum\limits_{m\geq 1}\frac{1}{\left( j_{\nu ,m}^{\prime \prime }\right) ^{2}%
}=\frac{\nu +2}{4(\nu -1)\nu }\text{ \ \ and \ \ }\dsum\limits_{m\geq 1}%
\frac{1}{\left( j_{\nu ,m}^{\prime \prime }\right) ^{4}}=\frac{13\nu
^{3}+19\nu ^{2}+26\nu +8}{16(\nu -1)^{2}\nu ^{2}(\nu +1)(\nu +2)}
\end{equation*}%
where $j_{\nu ,m}^{\prime }$ and $j_{\nu ,m}^{\prime \prime }$ denotes the $%
m $th zeros of function $J_{\nu }^{\prime }$ and $J_{\nu }^{\prime \prime },$
respectively.

\section{Main \ Results}

\subsection{Radii of Starlikeness and Convexity of The Functions $f_{\protect%
\nu ,n},$ $g_{\protect\nu ,n}$ and $h_{\protect\nu ,n}$}

The first principal result we established concerns the radii of starlikeness
and reads as follows. Here and in the sequel $I_{\nu }$ denotes the modified
Bessel function of the first kind and order $\nu .$ Note that $I_{\nu
}(z)=i^{-\nu }J_{\nu }(iz)$

\begin{theorem}
\label{t2} The following statements hold:

\begin{description}
\item[a)] If $\nu >n$ and $\beta \in \lbrack 0,1),$ then $r_{\beta }^{\ast
}(f_{\nu ,n})=x_{\nu ,1}^{(n)},$ where $x_{\nu ,1}^{(n)}$ is the smallest
positive root of the equation 
\begin{equation*}
\frac{rJ_{\nu }^{(n+1)}(r)}{(\nu -n)J_{\nu }^{(n)}(r)}-\beta =0.
\end{equation*}%
Moreover, if $n-1<\nu <n$ and $\beta \in \lbrack 0,1),$ then we have $%
r_{\beta }^{\ast }(f_{\nu ,n})=x_{\nu ,2}^{(n)},$ where $x_{\nu ,2}^{(n)}$
is the smallest positive root of the equation%
\begin{equation*}
\frac{rI_{\nu }^{(n+1)}(r)}{(\nu -n)I_{\nu }^{(n)}(r)}-\beta =0.
\end{equation*}

\item[b)] If $\nu >n-1$ and $\beta \in \lbrack 0,1),$ then $r_{\beta }^{\ast
}(g_{\nu ,n})=y_{\nu ,1}^{(n)},$ where $y_{\nu ,1}^{(n)}$ is the smallest
positive root of the equation 
\begin{equation*}
\frac{rJ_{\nu }^{(n+1)}(r)}{J_{\nu }^{(n)}(r)}+n+1-\nu -\beta =0.
\end{equation*}

\item[c)] If $\nu >n-1$ and $\beta \in \lbrack 0,1),$ then $r_{\beta }^{\ast
}(h_{\nu ,n})=z_{\nu ,1}^{(n)},$ where $z_{\nu ,1}^{(n)}$ is the smallest
positive root of the equation 
\begin{equation*}
\frac{\sqrt{r}J_{\nu }^{(n+1)}(\sqrt{r})}{J_{\nu }^{(n)}(\sqrt{r})}+n+2-\nu
-2\beta =0.
\end{equation*}
\end{description}
\end{theorem}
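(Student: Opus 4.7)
The overall plan is to push the three functions through a common ``minimum-modulus'' argument based on the Mittag--Leffler product (\ref{r11}). For each of $f_{\nu,n}$, $g_{\nu,n}$, $h_{\nu,n}$ I would compute the logarithmic derivative $zF'(z)/F(z)$ in two ways: by logarithmic differentiation of the infinite product, which writes it as $1$ plus a rational sum indexed by the zeros $j_{\nu,m}^{(n)}$, and by direct differentiation of the defining formula, which produces the ratio $J_{\nu}^{(n+1)}/J_{\nu}^{(n)}$. Matching the two expressions identifies the defining equations for $y_{\nu,1}^{(n)}$, $z_{\nu,1}^{(n)}$, and the two candidates in part~(a) with the vanishing of the corresponding rational sum at level~$\beta$. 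The workhorse estimate is the elementary inequality $\func{Re}(z^{2}/(a^{2}-z^{2}))\le |z|^{2}/(a^{2}-|z|^{2})$, valid for $|z|<a$, $a>0$, with equality at $z=|z|$, together with its linear analogue $\func{Re}(z/(a-z))\le |z|/(a-|z|)$.

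For part~(b), (\ref{S1}) gives $g_{\nu,n}(z)=z\prod_{m\ge 1}(1-z^{2}/(j_{\nu,m}^{(n)})^{2})$ and then
$$\frac{zg_{\nu,n}'(z)}{g_{\nu,n}(z)}=1-2\sum_{m\ge 1}\frac{z^{2}}{(j_{\nu,m}^{(n)})^{2}-z^{2}}=1+n-\nu+\frac{zJ_{\nu}^{(n+1)}(z)}{J_{\nu}^{(n)}(z)}.$$
Termwise, the workhorse estimate yields $\func{Re}(zg_{\nu,n}'(z)/g_{\nu,n}(z))\ge rg_{\nu,n}'(r)/g_{\nu,n}(r)$ for $|z|=r<j_{\nu,1}^{(n)}$, with equality at $z=r$. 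Because the real-variable map $r\mapsto rg_{\nu,n}'(r)/g_{\nu,n}(r)$ decreases continuously from $1$ at $r=0$ to $-\infty$ as $r\to j_{\nu,1}^{(n)}$, it crosses $\beta$ exactly once, at $r=y_{\nu,1}^{(n)}$, and the equality case at $z=r$ prevents any larger disc from working. Part~(c) is essentially the same computation starting from $h_{\nu,n}(z)=z\prod_{m\ge 1}(1-z/(j_{\nu,m}^{(n)})^{2})$ with the linear version of the estimate. Part~(a) in the range $\nu>n$ also falls into this framework: one has
$$\frac{zf_{\nu,n}'(z)}{f_{\nu,n}(z)}=\frac{zJ_{\nu}^{(n+1)}(z)}{(\nu-n)J_{\nu}^{(n)}(z)}=1-\frac{2}{\nu-n}\sum_{m\ge 1}\frac{z^{2}}{(j_{\nu,m}^{(n)})^{2}-z^{2}},$$
and the coefficient $-2/(\nu-n)$ is negative, so the minimum-modulus argument applies verbatim; Lemma~\ref{Le3}, invoked with $a=(\beta-1)(\nu-n)/2<0$, pins down $x_{\nu,1}^{(n)}$ as the first positive zero of the associated Laguerre--P\'olya function $W_{\nu,n}$ and certifies $x_{\nu,1}^{(n)}<j_{\nu,1}^{(n)}$.

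The genuinely new case is part~(a) for $n-1<\nu<n$, where $-2/(\nu-n)>0$ flips the sign of every term of the rational sum. One instead uses the reverse bound $\func{Re}(z^{2}/(a^{2}-z^{2}))\ge -|z|^{2}/(a^{2}+|z|^{2})$, with equality at $z=i|z|$, so the minimum of $\func{Re}(zf_{\nu,n}'(z)/f_{\nu,n}(z))$ on $|z|=r$ migrates from the positive real axis to the imaginary axis. The identity $J_{\nu}^{(n)}(iz)=i^{\nu-n}I_{\nu}^{(n)}(z)$ then collapses the value at $z=ir$ to the real quantity
$$\frac{irJ_{\nu}^{(n+1)}(ir)}{(\nu-n)J_{\nu}^{(n)}(ir)}=\frac{rI_{\nu}^{(n+1)}(r)}{(\nu-n)I_{\nu}^{(n)}(r)},$$
which decreases monotonically from $1$ at $r=0$ to $-\infty$ and first meets the level~$\beta$ at $r=x_{\nu,2}^{(n)}$. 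This sign flip, and the attendant replacement of $J_{\nu}^{(n)}$ by $I_{\nu}^{(n)}$ in the defining equation, is the principal technical obstacle; a secondary bookkeeping point -- that each first positive root really is the supremum in the definition of the starlikeness radius -- follows from the strict monotonicity in $r$ of every summand in the rational sum.
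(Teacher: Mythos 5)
Your proposal is correct and follows essentially the same route as the paper's proof: the Mittag--Leffler product (\ref{r11}) and logarithmic differentiation, the bounds $\operatorname{Re}\left(z/(\lambda-z)\right)\le \left\vert z\right\vert/(\lambda-\left\vert z\right\vert)$ and $\operatorname{Re}\left(z/(\lambda-z)\right)\ge -\left\vert z\right\vert/(\lambda+\left\vert z\right\vert)$ locating the extremum at $z=r$ (resp. at $z=ir$ when $\nu-n<0$, whence the passage to $I_{\nu}^{(n)}$), and Lemma \ref{Le3} to place the first root below $j_{\nu ,1}^{(n)}$. The only cosmetic differences are that you derive uniqueness of the crossing from the termwise monotonicity of the partial-fraction sum where the paper invokes Lemma \ref{l11} on a quotient of power series, and that you leave implicit the (easy) check that $x_{\nu ,2}^{(n)}<j_{\nu ,1}^{(n)}$ in the case $n-1<\nu<n$.
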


\begin{proof}
Firstly, we prove part \textbf{a }for\textbf{\ }$\nu >n$ and \textbf{b} and 
\textbf{c} for $\nu >n-1.$ We need to show that the following inequalities%
\begin{equation}
\func{Re}\left( \frac{zf_{\nu ,n}^{\prime }(z)}{f_{\nu ,n}(z)}\right) >\beta
,\text{ \ \ }\func{Re}\left( \frac{zg_{\nu ,n}^{\prime }(z)}{g_{\nu ,n}(z)}%
\right) >\beta \text{ \ \ and \ \ }\func{Re}\left( \frac{zh_{\nu ,n}^{\prime
}(z)}{h_{\nu ,n}(z)}\right) >\beta  \label{r1}
\end{equation}%
are valid for $z\in \mathbb{D}_{r_{\beta }^{\ast }(f_{\nu ,n})},$ $z\in 
\mathbb{D}_{r_{\beta }^{\ast }(g_{\nu ,n})}$ and $z\in \mathbb{D}_{r_{\beta
}^{\ast }(h_{\nu ,n})},$ respectively, and each of the above inequalities
does not hold in larger disks.

When we write the equation (\ref{r11}) in definition of the functions $%
f_{\nu ,n}(z),$ $g_{\nu ,n}(z)$ and $h_{\nu ,n}(z)$ we get by using
logarithmic derivation 
\begin{eqnarray*}
\frac{zf_{\nu ,n}^{\prime }(z)}{f_{\nu ,n}(z)} &=&\frac{1}{\nu -n}\frac{%
zJ_{\nu }^{(n+1)}(z)}{J_{\nu }^{(n)}(z)}=1-\frac{1}{\nu -n}%
\dsum\limits_{m\geq 1}\frac{2z^{2}}{\left( j_{\nu ,m}^{(n)}\right) ^{2}-z^{2}%
},\text{ \ \ }(\nu >n), \\
\frac{zg_{\nu ,n}^{\prime }(z)}{g_{\nu ,n}(z)} &=&n+1-\nu +\frac{zJ_{\nu
}^{(n+1)}(z)}{J_{\nu }^{(n)}(z)}=1-\dsum\limits_{m\geq 1}\frac{2z^{2}}{%
\left( j_{\nu ,m}^{(n)}\right) ^{2}-z^{2}},\text{ \ \ }(\nu >n-1), \\
\frac{zh_{\nu ,n}^{\prime }(z)}{h_{\nu ,n}(z)} &=&1+\frac{n-\nu }{2}+\frac{1%
}{2}\frac{\sqrt{z}J_{\nu }^{(n+1)}(\sqrt{z})}{J_{\nu }^{(n)}(\sqrt{z})}%
=1-\dsum\limits_{m\geq 1}\frac{z}{\left( j_{\nu ,m}^{(n)}\right) ^{2}-z},%
\text{ \ \ }(\nu >n-1).
\end{eqnarray*}%
It is known \cite{Ba1} that if $z\in 
\mathbb{C}
$ and $\lambda \in 
\mathbb{R}
$ are such that $\lambda >\left\vert z\right\vert ,$ then%
\begin{equation}
\frac{\left\vert z\right\vert }{\lambda -\left\vert z\right\vert }\geq \func{%
Re}\left( \frac{z}{\lambda -z}\right) .  \label{r2}
\end{equation}%
Then the inequality%
\begin{equation*}
\frac{\left\vert z\right\vert ^{2}}{\left( j_{\nu ,m}^{(n)}\right)
^{2}-\left\vert z\right\vert ^{2}}\geq \func{Re}\left( \frac{z^{2}}{\left(
j_{\nu ,m}^{(n)}\right) ^{2}-z^{2}}\right)
\end{equation*}%
holds for every $\nu >n-1.$ Therefore,%
\begin{eqnarray*}
\func{Re}\left( \frac{zf_{\nu ,n}^{\prime }(z)}{f_{\nu ,n}(z)}\right) &=&1-%
\frac{1}{\nu -n}\dsum\limits_{m\geq 1}\func{Re}\left( \frac{2z^{2}}{\left(
j_{\nu ,m}^{(n)}\right) ^{2}-z^{2}}\right) \geq 1-\frac{1}{\nu -n}%
\dsum\limits_{m\geq 1}\frac{2\left\vert z\right\vert ^{2}}{\left( j_{\nu
,m}^{(n)}\right) ^{2}-\left\vert z\right\vert ^{2}}=\frac{\left\vert
z\right\vert f_{\nu ,n}^{\prime }(\left\vert z\right\vert )}{f_{\nu
,n}(\left\vert z\right\vert )}, \\
\func{Re}\left( \frac{zg_{\nu ,n}^{\prime }(z)}{g_{\nu ,n}(z)}\right)
&=&1-\dsum\limits_{m\geq 1}\func{Re}\left( \frac{2z^{2}}{\left( j_{\nu
,m}^{(n)}\right) ^{2}-z^{2}}\right) \geq 1-\dsum\limits_{m\geq 1}\frac{%
2\left\vert z\right\vert ^{2}}{\left( j_{\nu ,m}^{(n)}\right)
^{2}-\left\vert z\right\vert ^{2}}=\frac{\left\vert z\right\vert g_{\nu
,n}^{\prime }(\left\vert z\right\vert )}{g_{\nu ,n}(\left\vert z\right\vert )%
}, \\
\func{Re}\left( \frac{zh_{\nu ,n}^{\prime }(z)}{h_{\nu ,n}(z)}\right)
&=&1-\dsum\limits_{m\geq 1}\func{Re}\left( \frac{z}{\left( j_{\nu
,m}^{(n)}\right) ^{2}-z}\right) \geq 1-\dsum\limits_{m\geq 1}\frac{%
\left\vert z\right\vert }{\left( j_{\nu ,m}^{(n)}\right) ^{2}-\left\vert
z\right\vert }=\frac{\left\vert z\right\vert h_{\nu ,n}^{\prime }(\left\vert
z\right\vert )}{h_{\nu ,n}(\left\vert z\right\vert )},
\end{eqnarray*}%
where equalities are attained only when $z=\left\vert z\right\vert =r.$ The
latest inequalities and the minimum principle for harmonic functions imply
that the corresponding inequalities in (\ref{r1}) hold if only if $%
\left\vert z\right\vert <x_{\nu ,1}^{(n)},$ $\left\vert z\right\vert <y_{\nu
,1}^{(n)}$ and $\left\vert z\right\vert <z_{\nu ,1}^{(n)},$ respectively,
where $x_{\nu ,1}^{(n)},$ $y_{\nu ,1}^{(n)}$ and $z_{\nu ,1}^{(n)}$ is the
smallest positive roots of the equations 
\begin{equation*}
\frac{rf_{\nu ,n}^{\prime }(r)}{f_{\nu ,n}(r)}=\beta ,\text{ \ \ }\frac{%
rg_{\nu ,n}^{\prime }(r)}{g_{\nu ,n}(r)}=\beta \text{ \ \ and \ \ }\frac{%
rh_{\nu ,n}^{\prime }(r)}{h_{\nu ,n}(r)}=\beta ,
\end{equation*}%
which are equivalent to 
\begin{equation*}
\frac{rJ_{\nu }^{(n+1)}(r)}{(\nu -n)J_{\nu }^{(n)}(r)}-\beta =0,\text{ \ \ }%
\frac{rJ_{\nu }^{(n+1)}(r)}{J_{\nu }^{(n)}(r)}+n+1-\nu -\beta =0
\end{equation*}%
and%
\begin{equation*}
\frac{\sqrt{r}J_{\nu }^{(n+1)}(\sqrt{r})}{J_{\nu }^{(n)}(\sqrt{r})}+n+2-\nu
-2\beta =0.
\end{equation*}%
The result follows from Lemma \ref{Le3} by taking instead of $a$ the values $%
\frac{(\beta -1)(\nu -n)}{2}$, $\frac{\beta -1}{2}$ and $\beta -1$,
respectively. In other words, Lemma \ref{Le3} show that all the zeros of the
above three functions are real and their first positive zeros do not exceed
the first positive zeros $j_{v,1}^{(n)}$ and $\sqrt{j_{v,1}^{(n)}}$. This
guarantees that the above inequalities hold. This completes the proof of
part \textbf{a} when $\nu >n$, and parts \textbf{b} and \textbf{c} when $\nu
>n-1$.

Now, to prove the statement for part \textbf{a }when $\nu \in (n-1,n),$ we
use the counterpart of (\ref{r2}), that is, 
\begin{equation}
\func{Re}\left( \frac{z}{\lambda -z}\right) \geq \frac{-\left\vert
z\right\vert }{\lambda +\left\vert z\right\vert },  \label{r3}
\end{equation}%
which holds for all $z\in 
\mathbb{C}
$ and $\lambda \in 
\mathbb{R}
$ are such that $\lambda >\left\vert z\right\vert $ (see \cite{Baricz}). If
in the inequality (\ref{r3}), we replace $z$ by $z^{2}$ and $\lambda $ by $%
\left( j_{\nu ,m}^{(n)}\right) ^{2},$ it follows that 
\begin{equation*}
\func{Re}\left( \frac{z^{2}}{\left( j_{\nu ,m}^{(n)}\right) ^{2}-z^{2}}%
\right) \geq \frac{-\left\vert z\right\vert ^{2}}{\left( j_{\nu
,m}^{(n)}\right) ^{2}+\left\vert z\right\vert ^{2}},
\end{equation*}%
provided that $\left\vert z\right\vert <j_{\nu ,1}^{(n)}$. Thus, for $%
n-1<\nu <n$ we obtain%
\begin{equation*}
\func{Re}\left( \frac{zf_{\nu ,n}^{\prime }(z)}{f_{\nu ,n}(z)}\right) =1-%
\frac{1}{\nu -n}\dsum\limits_{m\geq 1}\func{Re}\left( \frac{2z^{2}}{\left(
j_{\nu ,m}^{(n)}\right) ^{2}-z^{2}}\right) \geq 1+\frac{1}{\nu -n}%
\dsum\limits_{m\geq 1}\frac{2\left\vert z\right\vert ^{2}}{\left( j_{\nu
,m}^{(n)}\right) ^{2}+\left\vert z\right\vert ^{2}}=\frac{i\left\vert
z\right\vert f_{\nu ,n}^{\prime }(i\left\vert z\right\vert )}{f_{\nu
,n}(i\left\vert z\right\vert )}.
\end{equation*}%
In this case equality is attained if $z=i\left\vert z\right\vert =ir.$
Moreover, the latter inequality implies that%
\begin{equation*}
\func{Re}\left( \frac{zf_{\nu ,n}^{\prime }(z)}{f_{\nu ,n}(z)}\right) >\beta
\end{equation*}%
if and only if $\left\vert z\right\vert <x_{v,2}^{(n)}$, where $%
x_{v,2}^{(n)} $ denotes the smallest positive root of the equations%
\begin{equation*}
\frac{irf_{\nu ,n}^{\prime }(ir)}{f_{\nu ,n}(ir)}=\beta
\end{equation*}%
which is equivalent to 
\begin{equation*}
\frac{irJ_{\nu }^{(n+1)}(ir)}{(\nu -n)J_{\nu }^{(n)}(ir)}-\beta =0\text{ or
\ }\frac{rI_{\nu }^{(n+1)}(r)}{(\nu -n)I_{\nu }^{(n)}(r)}-\beta =0
\end{equation*}%
for $n-1<\nu <n.$ It follows from Lemma \ref{Le3} that the first positive
zero of $z\mapsto irJ_{\nu }^{(n+1)}(ir)-\beta \left( \nu -n\right) J_{\nu
}^{(n)}(ir)$ does not exceed $j_{\nu ,1}^{(n)}$ which guarantees that the
above inequalities are valid. All we need to prove is that the above
function has actually only one zero in $(0,\infty )$. Observe that,
according to Lemma \ref{l11}, the function%
\begin{equation*}
r\mapsto \frac{irJ_{\nu }^{(n+1)}(ir)}{J_{\nu }^{(n)}(ir)}=\frac{%
\sum_{m=0}^{\infty }\frac{(2m-n+\nu )\Gamma (2m+\nu +1)}{m!2^{2m+\nu }\Gamma
(2m-n+\nu +1)\Gamma (m+\nu +1)}r^{2m}}{\sum_{m=0}^{\infty }\frac{\Gamma
(2m+\nu +1)}{m!2^{2m+\nu }\Gamma (2m-n+\nu +1)\Gamma (m+\nu +1)}r^{2m}}
\end{equation*}%
is increasing on $(0,\infty )$ as a quotient of two power series whose
positive coefficients form the increasing \textquotedblleft quotient
sequence\textquotedblright\ $\{2m-n+\nu \}_{m\geq 0}.$ On the other hand,
the above function tends to $\nu -n$ when $r\rightarrow 0$, so that its
graph can intersect the horizontal line $y=\beta \left( \nu -n\right) >\nu
-n $ only once. This completes the proof of part \textbf{a} of the theorem
when $\nu \in (n-1,n)$.
\end{proof}

With regards to Theorem \ref{t2}, we tabulate the radius of starlikeness for 
$f_{\nu ,n}$, $g_{\nu ,n}$ and $h_{\nu ,n}$ for a fixed $\nu =2.5,$ $%
n=0,1,2,3$ and respectively $\beta =0$ and $\beta =0.5$. These are given in
Table 1. Also in Table 1, we see that radius of starlikeness is decreasing
according to the order of derivative and the order of starlikeness. On the
other words, from all these results we concluded that$\ r_{\beta }^{\ast
}(f_{\nu ,0})>r_{\beta }^{\ast }(f_{\nu ,1})>r_{\beta }^{\ast }(f_{\nu
,2})>\cdots >r_{\beta }^{\ast }(f_{\nu ,n})>\cdots $ for $\beta \in \lbrack
0,1)$ and $\nu >n-1,$ $n\in 
\mathbb{N}
_{0}.$ In addition to, we can write $r_{\beta _{1}}^{\ast }(f_{\nu
,n})<r_{\beta _{0}}^{\ast }(f_{\nu ,n})$ for $0\leq \beta _{0}<\beta _{1}<1$
and $\nu >n-1,$ $n\in 
\mathbb{N}
_{0}.$ Same inequalities is also true for$\ r_{\beta }^{\ast }(g_{\nu ,n})$
and\ $r_{\beta }^{\ast }(h_{\nu ,n}).$

For $n=0$ in the Theorem \ref{t2} we obtain the results of Baricz et al \cite%
{Baricz}. Our results is a common generalization of these results.

\begin{eqnarray*}
&&%
\begin{tabular}{l|l|l|l|l|l|l|}
\cline{2-7}
& \multicolumn{2}{l}{$\ \ \ \ \ \ \ r_{\beta }^{\ast }(f_{2.5,n})$} & 
\multicolumn{2}{|l}{$\ \ \ \ \ \ \ r_{\beta }^{\ast }(g_{2.5,n})$} & 
\multicolumn{2}{|l|}{$\ \ \ \ \ \ \ r_{\beta }^{\ast }(h_{2.5,n})$} \\ 
\cline{2-7}
& $\beta =0$ & $\beta =0.5$ & $\beta =0$ & $\beta =0.5$ & $\beta =0$ & $%
\beta =0.5$ \\ \hline
\multicolumn{1}{|l|}{$n=0$} & $\ 3.6328$ & $\ 2.7569$ & $2.5011$ & $1.8192$
& $11.1696$ & $6.2556$ \\ \hline
\multicolumn{1}{|l|}{$n=1$} & $2.1056$ & $1.5926$ & $1.7975$ & $1.3307$ & $%
5.4265$ & $3.2312$ \\ \hline
\multicolumn{1}{|l|}{$n=2$} & $0.8512$ & $0.6229$ & $1.1285$ & $0.8512$ & $%
2.0284$ & $1.2735$ \\ \hline
\multicolumn{1}{|l|}{$n=3$} & $0.4586$ & $0.3051$ & $0.4819$ & $0.3703$ & $%
0.3543$ & $0.2323$ \\ \hline
\end{tabular}
\\
&& \\
&&\text{\textbf{Table1.} Radii of starlikeness for }f_{\nu ,n},g_{\nu ,n}%
\text{ and }h_{\nu ,n}\text{ when }\nu =2.5
\end{eqnarray*}

The second principal result we established concerns the radii of convexity
and reads as follows.

\begin{theorem}
\label{t3} The following statements hold:

\begin{description}
\item[a)] If $\nu >n$ and $\beta \in \lbrack 0,1),$ then\ the radius $%
r_{\beta }^{c}(f_{\nu ,n})$ is the smallest positive root of the equation 
\begin{equation*}
1-\beta +\frac{rJ_{\nu }^{(n+2)}(r)}{J_{\nu }^{(n+1)}(r)}+\left( \frac{1}{%
\nu -n}-1\right) \frac{rJ_{\nu }^{(n+1)}(r)}{J_{\nu }^{(n)}(r)}=0.
\end{equation*}%
Moreover, $r_{\beta }^{c}(f_{\nu ,n})<j_{\nu ,1}^{(n+1)}<j_{\nu ,1}^{(n)}.$

\item[b)] If $\nu >n-1$ and $\beta \in \lbrack 0,1),$ then\ the radius $%
r_{\beta }^{c}(g_{\nu ,n})$ is the smallest positive root of the equation 
\begin{equation*}
n+1-\nu -\beta +\frac{(n-\nu +2)rJ_{\nu }^{(n+1)}(r)+r^{2}J_{\nu }^{(n+2)}(r)%
}{(n-\nu +1)J_{\nu }^{(n)}(r)+rJ_{\nu }^{(n+1)}(r)}=0.
\end{equation*}

\item[c)] If $\nu >n-1$ and $\beta \in \lbrack 0,1),$ then\ the radius $%
r_{\beta }^{c}(h_{\nu ,n})$ is the smallest positive root of the equation 
\begin{equation*}
\frac{n+2-\nu -2\beta }{2}+\frac{\sqrt{r}}{2}\frac{(n-\nu +3)J_{\nu
}^{(n+1)}(\sqrt{r})+\sqrt{r}J_{\nu }^{(n+2)}(\sqrt{r})}{(n-\nu +2)J_{\nu
}^{(n)}(\sqrt{r})+\sqrt{r}J_{\nu }^{(n+1)}(\sqrt{r})}=0.
\end{equation*}
\end{description}
\end{theorem}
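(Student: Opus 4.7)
My plan is to parallel the proof of Theorem \ref{t2}, replacing the starlikeness quantity $zf'/f$ by the convexity quantity $1+zf''/f'$ and carrying out one additional logarithmic differentiation before applying inequality (\ref{r2}) term-by-term together with the minimum principle for harmonic functions. Parts (b) and (c) admit a clean treatment because $g'_{\nu,n}$ and $h'_{\nu,n}$ themselves have Hadamard products with only real zeros. Differentiating $g_{\nu,n}(z)=2^{\nu}\Gamma(\nu-n+1)z^{1+n-\nu}J_{\nu}^{(n)}(z)$ gives $g'_{\nu,n}(z)=2^{\nu}\Gamma(\nu-n+1)z^{n-\nu}[(n-\nu+1)J_{\nu}^{(n)}(z)+zJ_{\nu}^{(n+1)}(z)]$, and Lemma \ref{Le3} taken with $a=-\tfrac12$ shows that the bracketed factor lies in $\mathcal{LP}$ with smallest positive zero bounded above by $j_{\nu,1}^{(n)}$. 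Since $g_{\nu,n}$ is odd and $g'_{\nu,n}(0)=1$, one gets $g'_{\nu,n}(z)=\prod_{m\geq 1}(1-z^{2}/(\xi_{\nu,m}^{(n)})^{2})$ and consequently
\[
1+\frac{zg''_{\nu,n}(z)}{g'_{\nu,n}(z)}=1-\sum_{m\geq 1}\frac{2z^{2}}{(\xi_{\nu,m}^{(n)})^{2}-z^{2}}.
\]
Inequality (\ref{r2}) and the minimum principle then identify $r_{\beta}^{c}(g_{\nu,n})$ as the smallest positive root of $1+rg''_{\nu,n}(r)/g'_{\nu,n}(r)=\beta$; rewriting the derivatives in terms of $J_{\nu}^{(n)},J_{\nu}^{(n+1)},J_{\nu}^{(n+2)}$ reproduces the equation in part (b). Part (c) follows by the conformal change $z\mapsto\sqrt{z}$ with Lemma \ref{Le3} now invoked at $a=-1$.

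For part (a), logarithmic differentiation of the identity $f'_{\nu,n}(z)=f_{\nu,n}(z)\cdot J_{\nu}^{(n+1)}(z)/[(\nu-n)J_{\nu}^{(n)}(z)]$ yields
\[
1+\frac{zf''_{\nu,n}(z)}{f'_{\nu,n}(z)}=1+\frac{zJ_{\nu}^{(n+2)}(z)}{J_{\nu}^{(n+1)}(z)}+\Bigl(\frac{1}{\nu-n}-1\Bigr)\frac{zJ_{\nu}^{(n+1)}(z)}{J_{\nu}^{(n)}(z)},
\]
which matches the expression in the statement. Feeding in the Mittag--Leffler expansions of $zJ_{\nu}^{(n+1)}/J_{\nu}^{(n)}$ and $zJ_{\nu}^{(n+2)}/J_{\nu}^{(n+1)}$ derived from (\ref{r11}) by logarithmic differentiation rewrites the above as
\[
1-\sum_{m\geq 1}\frac{2z^{2}}{(j_{\nu,m}^{(n+1)})^{2}-z^{2}}+\frac{\nu-n-1}{\nu-n}\sum_{m\geq 1}\frac{2z^{2}}{(j_{\nu,m}^{(n)})^{2}-z^{2}}.
\]
The first series responds directly to (\ref{r2}), and the inequality $r_{\beta}^{c}(f_{\nu,n})<j_{\nu,1}^{(n+1)}<j_{\nu,1}^{(n)}$ will follow because $f'_{\nu,n}$ is nonvanishing on $\mathbb{D}_{j_{\nu,1}^{(n+1)}}$ (its zeros coincide with those of $J_{\nu}^{(n+1)}$ inside $\mathbb{D}_{j_{\nu,1}^{(n)}}$) together with the interlacing in Lemma \ref{l1}(b).

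The main obstacle is the second series, whose coefficient $(\nu-n-1)/(\nu-n)$ is positive precisely when $\nu>n+1$, a regime in which (\ref{r2}) gives a bound in the wrong direction. I would therefore split the proof of (a) into two sub-cases: for $n<\nu\leq n+1$ the coefficient is non-positive and the elementary argument of Theorem \ref{t2} transfers verbatim, while for $\nu>n+1$ a Laguerre--P\'olya argument is required. Concretely, clearing denominators in $1+zf''_{\nu,n}/f'_{\nu,n}=\beta$ reduces the problem to locating the smallest positive zero of the entire function
\[
W_{\nu,n,\beta}(z):=(1-\beta)(\nu-n)J_{\nu}^{(n)}(z)J_{\nu}^{(n+1)}(z)+(\nu-n)zJ_{\nu}^{(n)}(z)J_{\nu}^{(n+2)}(z)-(\nu-n-1)z\bigl(J_{\nu}^{(n+1)}(z)\bigr)^{2},
\]
and the plan is to show $W_{\nu,n,\beta}\in\mathcal{LP}$ with smallest positive zero not exceeding $j_{\nu,1}^{(n+1)}$ by adapting the Jensen-polynomial reasoning of Lemma \ref{Le3} to this three-term Bessel combination. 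Controlling the contribution of the $\bigl(J_{\nu}^{(n+1)}\bigr)^{2}$ term within that Jensen--Hurwitz framework is the technical heart of the argument and the step I anticipate will demand the most care.
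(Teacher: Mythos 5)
Your treatment of parts (b) and (c) coincides with the paper's: differentiate to get $(n-\nu+1)J_{\nu}^{(n)}(z)+zJ_{\nu}^{(n+1)}(z)$ (resp.\ the analogous combination for $h_{\nu,n}$), invoke Lemma \ref{Le3} with $a=-\tfrac12$ (resp.\ $a=-1$) to get reality of the zeros and the bound by $j_{\nu,1}^{(n)}$, factor, and apply (\ref{r2}) with the minimum principle. That is correct, modulo the glossed-over Hadamard/order computation that justifies the product representation, which the paper carries out explicitly.

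Part (a), however, contains a genuine gap exactly where you flag it. For $\nu>n+1$ the paper does not clear denominators or prove anything about your three-term combination $W_{\nu,n,\beta}$; it uses the inequality of \cite[Lemma 2.1]{Ba1},
\begin{equation*}
\mu \func{Re}\left( \frac{z}{a-z}\right) -\func{Re}\left( \frac{z}{b-z}\right) \geq \mu \frac{\left\vert z\right\vert }{a-\left\vert z\right\vert }-\frac{\left\vert z\right\vert }{b-\left\vert z\right\vert },\qquad a>b>0,\ \mu\in[0,1],\ |z|<b,
\end{equation*}
applied termwise with $\mu=\frac{\nu-n-1}{\nu-n}\in[0,1]$, $a=\bigl(j_{\nu,m}^{(n)}\bigr)^{2}$, $b=\bigl(j_{\nu,m}^{(n+1)}\bigr)^{2}$, the hypothesis $a>b$ being supplied by the interlacing of the zeros of $J_{\nu}^{(n)}$ and $J_{\nu}^{(n+1)}$ from Lemma \ref{l1}. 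This immediately restores $\inf_{z\in\mathbb{D}_{r}}\func{Re}\bigl(1+zf_{\nu,n}''/f_{\nu,n}'\bigr)=1+rf_{\nu,n}''(r)/f_{\nu,n}'(r)$ in the problematic regime. Your proposed substitute --- proving $W_{\nu,n,\beta}\in\mathcal{LP}$ by a Jensen-polynomial argument --- is not carried out, and it is doubtful it can be: $\mathcal{LP}$ is closed under products but not under linear combinations of products, and the $-(\nu-n-1)z\bigl(J_{\nu}^{(n+1)}(z)\bigr)^{2}$ term has no obvious place in the Laguerre--P\'olya/Jensen framework of Lemma \ref{Le3}, which handles only combinations of the form $CP_m-\varsigma P_m'$. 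More importantly, even if you located all real zeros of $W_{\nu,n,\beta}$, that alone would not prove $\func{Re}\bigl(1+zf_{\nu,n}''(z)/f_{\nu,n}'(z)\bigr)>\beta$ for all \emph{complex} $z$ in the disk, which is what the definition of $r_{\beta}^{c}$ demands; you still need the termwise real-part estimate that the paired inequality above provides. A secondary omission: you never argue that $r\mapsto 1+rf_{\nu,n}''(r)/f_{\nu,n}'(r)$ is strictly decreasing on $(0,j_{\nu,1}^{(n+1)})$ (needed for uniqueness of the root and to identify it with the radius); for $\nu>n+1$ this again requires the interlacing $j_{\nu,m}^{(n+1)}<j_{\nu,m}^{(n)}$, as in the paper's computation of $d\varphi_{\nu,n}/dr$.
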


\begin{proof}
\textbf{a)} Since%
\begin{equation*}
1+\frac{zf_{\nu ,n}^{\prime \prime }(z)}{f_{\nu ,n}^{\prime }(z)}=1+\frac{%
zJ_{\nu }^{(n+2)}(z)}{J_{\nu }^{(n+1)}(z)}+\left( \frac{1}{\nu -n}-1\right) 
\frac{zJ_{\nu }^{(n+1)}(z)}{J_{\nu }^{(n)}(z)}
\end{equation*}%
and by means of (\ref{r11}) we have%
\begin{equation*}
\frac{zJ_{\nu }^{(n+1)}(z)}{J_{\nu }^{(n)}(z)}=\nu -n-\dsum\limits_{m\geq 1}%
\frac{2z^{2}}{\left( j_{\nu ,m}^{(n)}\right) ^{2}-z^{2}}
\end{equation*}%
it follows that%
\begin{equation*}
1+\frac{zf_{\nu ,n}^{\prime \prime }(z)}{f_{\nu ,n}^{\prime }(z)}=1-\left( 
\frac{1}{\nu -n}-1\right) \dsum\limits_{m\geq 1}\frac{2z^{2}}{\left( j_{\nu
,m}^{(n)}\right) ^{2}-z^{2}}-\dsum\limits_{m\geq 1}\frac{2z^{2}}{\left(
j_{\nu ,m}^{(n+1)}\right) ^{2}-z^{2}}.
\end{equation*}%
Now, suppose that $\nu \in (n,n+1].$ By using the inequality (\ref{r2}), for
all $z\in \mathbb{D}_{j_{\nu ,1}^{(n)}}$ we obtain the inequality%
\begin{eqnarray*}
\func{Re}\left( 1+\frac{zf_{\nu ,n}^{\prime \prime }(z)}{f_{\nu ,n}^{\prime
}(z)}\right) &=&1-\left( \frac{1}{\nu -n}-1\right) \dsum\limits_{m\geq 1}%
\func{Re}\left( \frac{2z^{2}}{\left( j_{\nu ,m}^{(n)}\right) ^{2}-z^{2}}%
\right) -\dsum\limits_{m\geq 1}\func{Re}\left( \frac{2z^{2}}{\left( j_{\nu
,m}^{(n+1)}\right) ^{2}-z^{2}}\right) \\
&\geq &1-\left( \frac{1}{\nu -n}-1\right) \dsum\limits_{m\geq 1}\frac{2r^{2}%
}{\left( j_{\nu ,m}^{(n)}\right) ^{2}-r^{2}}-\dsum\limits_{m\geq 1}\frac{%
2r^{2}}{\left( j_{\nu ,m}^{(n+1)}\right) ^{2}-r^{2}}
\end{eqnarray*}%
where $\left\vert z\right\vert =r.$ Moreover, observe that if we use the
inequality \cite[Lemma 2.1]{Ba1}%
\begin{equation*}
\mu \func{Re}\left( \frac{z}{a-z}\right) -\func{Re}\left( \frac{z}{b-z}%
\right) \geq \mu \frac{\left\vert z\right\vert }{a-\left\vert z\right\vert }-%
\frac{\left\vert z\right\vert }{b-\left\vert z\right\vert }
\end{equation*}%
where $a>b>0,$ $\mu \in \lbrack 0,1]$ and $z\in 
\mathbb{C}
$ such that $|z|<b$, then we get that the above inequality is also valid
when $\nu >n+1$. Here we used that the zeros of the $n$th and $(n+1)$th
derivative of $J_{v}$ are interlacing according to Lemma\ref{l1}. The above
inequality implies for $r\in (0,j_{\nu ,1}^{(n)})$%
\begin{equation*}
\underset{z\in \mathbb{D}_{r}}{\inf }\left\{ \func{Re}\left( 1+\frac{zf_{\nu
,n}^{\prime \prime }(z)}{f_{\nu ,n}^{\prime }(z)}\right) \right\} =1+\frac{%
rf_{\nu ,n}^{\prime \prime }(r)}{f_{\nu ,n}^{\prime }(r)}.
\end{equation*}%
On the other hand, we define the function $\varphi _{\nu ,n}:(n,j_{\nu
,1}^{(n)})\rightarrow 
\mathbb{R}
,$ 
\begin{equation*}
\varphi _{\nu ,n}(r)=1+\frac{rf_{\nu ,n}^{\prime \prime }(r)}{f_{\nu
,n}^{\prime }(r)}.
\end{equation*}%
Since the zeros of the $n$th and $(n+1)$th derivative of $J_{v}$ are
interlacing according to Lemma \ref{l1} and $r<j_{\nu ,1}^{(n+1)}<j_{\nu
,1}^{(n)}$ $\left( \text{or }r<\sqrt{j_{\nu ,1}^{(n)}j_{\nu ,1}^{(n+1)}}%
\right) $ for all $\nu >n$ we have 
\begin{equation*}
\left( j_{\nu ,m}^{(n)}\right) \left( \left( j_{\nu ,m}^{(n+1)}\right)
^{2}-r^{2}\right) -\left( j_{\nu ,m}^{(n+1)}\right) \left( \left( j_{\nu
,m}^{(n)}\right) ^{2}-r^{2}\right) <0.
\end{equation*}%
Thus following inequality 
\begin{eqnarray*}
\frac{d\varphi _{\nu ,n}(r)}{dr} &=&-\left( \frac{1}{\nu -n}-1\right)
\dsum\limits_{m\geq 1}\frac{4r\left( j_{\nu ,m}^{(n)}\right) ^{2}}{\left(
\left( j_{\nu ,m}^{(n)}\right) ^{2}-r^{2}\right) ^{2}}-\dsum\limits_{m\geq 1}%
\frac{4r\left( j_{\nu ,m}^{(n+1)}\right) ^{2}}{\left( \left( j_{\nu
,m}^{(n+1)}\right) ^{2}-r^{2}\right) ^{2}} \\
&<&\dsum\limits_{m\geq 1}\frac{4r\left( j_{\nu ,m}^{(n)}\right) ^{2}}{\left(
\left( j_{\nu ,m}^{(n)}\right) ^{2}-r^{2}\right) ^{2}}-\dsum\limits_{m\geq 1}%
\frac{4r\left( j_{\nu ,m}^{(n+1)}\right) ^{2}}{\left( \left( j_{\nu
,m}^{(n+1)}\right) ^{2}-r^{2}\right) ^{2}} \\
&=&4r\dsum\limits_{m\geq 1}\frac{\left( j_{\nu ,m}^{(n)}\right) ^{2}\left(
\left( j_{\nu ,m}^{(n+1)}\right) ^{2}-r^{2}\right) ^{2}-\left( j_{\nu
,m}^{(n+1)}\right) ^{2}\left( \left( j_{\nu ,m}^{(n)}\right)
^{2}-r^{2}\right) ^{2}}{\left( \left( j_{\nu ,m}^{(n)}\right)
^{2}-r^{2}\right) ^{2}\left( \left( j_{\nu ,m}^{(n+1)}\right)
^{2}-r^{2}\right) ^{2}}<0
\end{eqnarray*}%
is satisfied. Consequently, the function $\varphi _{\nu ,n}$ is strictly
decreasing. Observe also that $\lim_{r\searrow 0}\varphi _{\nu
,n}(r)=1>\beta $ and $\lim_{r\nearrow j_{\nu ,1}^{(n)}}\varphi _{\nu
,n}(r)=-\infty ,$ which means that for $z\in \mathbb{D}_{r_{1}}$ we have%
\begin{equation*}
\func{Re}\left( 1+\frac{zf_{\nu ,n}^{\prime \prime }(z)}{f_{\nu ,n}^{\prime
}(z)}\right) >\beta
\end{equation*}%
if and ony if $r_{1}$ is the unique root of 
\begin{equation*}
1+\frac{rf_{\nu ,n}^{\prime \prime }(r)}{f_{\nu ,n}^{\prime }(r)}=\beta ,
\end{equation*}%
situated in $(0,j_{\nu ,1}^{(n)}).$

\textbf{b)} Observe that%
\begin{equation*}
1+\frac{zg_{\nu ,n}^{\prime \prime }(z)}{g_{\nu ,n}^{\prime }(z)}=(n-\nu +1)+%
\frac{(n-\nu +2)zJ_{\nu }^{(n+1)}(z)+z^{2}J_{\nu }^{(n+2)}(z)}{(n-\nu
+1)J_{\nu }^{(n)}(z)+zJ_{\nu }^{(n+1)}(z)}.
\end{equation*}

By using (\ref{F1}) and (\ref{r11}) we have that%
\begin{eqnarray}
g_{\nu ,n}^{\prime }(z) &=&2^{\nu }\Gamma (\nu -n+1)z^{n-\nu }\left[ (n-\nu
+1)J_{\nu }^{(n)}(z)+zJ_{\nu }^{(n+1)}(z)\right]  \label{r31} \\
&=&\sum_{m=0}^{\infty }\frac{\left( -1\right) ^{m}(2m+1)\Gamma (2m+\nu
+1)\Gamma (\nu -n+1)}{m!\Gamma (2m-n+\nu +1)\Gamma (m+\nu +1)}\left( \frac{z%
}{2}\right) ^{2m}  \notag
\end{eqnarray}%
and%
\begin{equation*}
\underset{m\rightarrow \infty }{\lim }\frac{m\log m}{%
\begin{array}{c}
\lbrack 2m\log 2+\log \Gamma (m+1)+\log \Gamma (2m-n+\nu +1)+\log \Gamma
(m+\nu +1) \\ 
-\log \Gamma (2m+\nu +1)-\log \Gamma (\nu -n+1)-\log (2m+1)]%
\end{array}%
}=\frac{1}{2}.
\end{equation*}%
Here, we used $m!=\Gamma (m+1)$ and $\underset{m\rightarrow \infty }{\lim }%
\frac{\log \Gamma (am+b)}{m\log m}=a,$ where $b$ and $c$ are positive
constants. So, by applying Hadamard's Theorem \cite[p. 26]{Le} we can write
the infinite product representation of $g_{\nu ,n}^{\prime }(z)$ as follows:%
\begin{equation}
g_{\nu ,n}^{\prime }(z)=\dprod\limits_{m\geq 1}\left( 1-\frac{z^{2}}{\left(
\gamma _{\nu ,m}^{(n)}\right) ^{2}}\right) ,  \label{r4}
\end{equation}%
where $\gamma _{\nu ,m}^{(n)}$ denotes the $m$th positive zero of the
function $g_{\nu ,n}^{\prime }.$ From Lemma \ref{Le3} for $\nu >n-1$ the
function $g_{\nu ,n}^{\prime }\in \mathcal{LP}$, and the smallest positive
zero of $g_{\nu ,n}^{\prime }$ does not exceed the first positive zero of $%
J_{\nu }^{(n)}.$

By means of (\ref{r4}) we have 
\begin{equation*}
1+\frac{zg_{\nu ,n}^{\prime \prime }(z)}{g_{\nu ,n}^{\prime }(z)}%
=1-\dsum\limits_{m\geq 1}\frac{2z^{2}}{\left( \gamma _{\nu ,m}^{(n)}\right)
^{2}-z^{2}}.
\end{equation*}%
By using the inequality (\ref{r2}), for all $z\in \mathbb{D}_{\gamma _{\nu
,m}^{(n)}}$ we obtain the inequality%
\begin{equation*}
\func{Re}\left( 1+\frac{zg_{\nu ,n}^{\prime \prime }(z)}{g_{\nu ,n}^{\prime
}(z)}\right) \geq 1-\dsum\limits_{m\geq 1}\frac{2r^{2}}{\left( \gamma _{\nu
,m}^{(n)}\right) ^{2}-r^{2}}
\end{equation*}%
where $\left\vert z\right\vert =r.$ Thus, for $r\in (0,\gamma _{\nu
,1}^{(n)})$ we get%
\begin{equation*}
\underset{z\in \mathbb{D}_{r}}{\inf }\left\{ \func{Re}\left( 1+\frac{zg_{\nu
,n}^{\prime \prime }(z)}{g_{\nu ,n}^{\prime }(z)}\right) \right\} =1+\frac{%
rg_{\nu ,n}^{\prime \prime }(r)}{g_{\nu ,n}^{\prime }(r)}.
\end{equation*}%
The function $G_{\nu ,n}:(0,\gamma _{\nu ,1}^{(n)})\rightarrow 
\mathbb{R}
,$ defined by%
\begin{equation*}
G_{\nu ,n}(r)=1+\frac{rg_{\nu ,n}^{\prime \prime }(r)}{g_{\nu ,n}^{\prime
}(r)},
\end{equation*}%
is strictly decreasing and $\lim_{r\searrow 0}G_{\nu ,n}(r)=1>\beta $ and $%
\lim_{r\nearrow \gamma _{\nu ,1}^{(n)}}G_{\nu ,n}(r)=-\infty $.
Consequently, in view of the minimum principle for harmonic functions for $%
z\in \mathbb{D}_{r_{2}}$ we have that%
\begin{equation*}
\func{Re}\left( 1+\frac{zg_{\nu ,n}^{\prime \prime }(z)}{g_{\nu ,n}^{\prime
}(z)}\right) >\beta
\end{equation*}%
if and ony if $r_{2}$ is the unique root of 
\begin{equation*}
1+\frac{rg_{\nu ,n}^{\prime \prime }(r)}{g_{\nu ,n}^{\prime }(r)}=\beta ,
\end{equation*}%
situated in $(0,\gamma _{\nu ,1}^{(n)}).$

\textbf{c)} Observe that%
\begin{equation*}
1+\frac{zh_{\nu ,n}^{\prime \prime }(z)}{h_{\nu ,n}^{\prime }(z)}=\frac{%
n-\nu +2}{2}+\frac{\sqrt{z}}{2}\frac{(n-\nu +3)J_{\nu }^{(n+1)}(\sqrt{z})+%
\sqrt{z}J_{\nu }^{(n+2)}(\sqrt{z})}{(n-\nu +2)J_{\nu }^{(n)}(\sqrt{z})+\sqrt{%
z}J_{\nu }^{(n+1)}(\sqrt{z})}.
\end{equation*}

By using (\ref{F1}) and (\ref{r11}) we have that%
\begin{eqnarray}
h_{\nu ,n}^{\prime }(z) &=&2^{\nu -1}\Gamma (\nu -n+1)z^{\frac{n-\nu }{2}}%
\left[ (n-\nu +2)J_{\nu }^{(n)}(\sqrt{z})+\sqrt{z}J_{\nu }^{(n+1)}(\sqrt{z})%
\right]  \label{r32} \\
&=&\sum_{m=0}^{\infty }\frac{\left( -1\right) ^{m}(m+1)\Gamma (2m+\nu
+1)\Gamma (\nu -n+1)}{m!\Gamma (2m-n+\nu +1)\Gamma (m+\nu +1)}\left( \frac{z%
}{4}\right) ^{m}  \notag
\end{eqnarray}%
and%
\begin{equation*}
\underset{m\rightarrow \infty }{\lim }\frac{m\log m}{%
\begin{array}{c}
\lbrack 2m\log 2+\log \Gamma (m+1)+\log \Gamma (2m-n+\nu +1)+\log \Gamma
(m+\nu +1) \\ 
-\log \Gamma (2m+\nu +1)-\log \Gamma (\nu -n+1)-\log (m+1)]%
\end{array}%
}=\frac{1}{2}.
\end{equation*}%
So, by applying Hadamard's Theorem \cite[p. 26]{Le} we can write the
infinite product representation of $h_{\nu ,n}^{\prime }(z)$ as follows:%
\begin{equation}
h_{\nu ,n}^{\prime }(z)=\dprod\limits_{m\geq 1}\left( 1-\frac{z}{\delta
_{\nu ,m}^{(n)}}\right) ,  \label{r41}
\end{equation}%
where $\delta _{\nu ,m}^{(n)}$ denotes the $m$th positive zero of the
function $h_{\nu ,n}^{\prime }.$ From Lemma \ref{Le3} for $\nu >n-1$ the
function $h_{\nu ,n}^{\prime }\in \mathcal{LP}$, and the smallest positive
zero of $h_{\nu ,n}^{\prime }$ does not exceed the first positive zero of $%
J_{\nu }^{(n)}.$

By means of (\ref{r4}) we have 
\begin{equation*}
1+\frac{zh_{\nu ,n}^{\prime \prime }(z)}{h_{\nu ,n}^{\prime }(z)}%
=1-\dsum\limits_{m\geq 1}\frac{z}{\delta _{\nu ,m}^{(n)}-z}.
\end{equation*}%
By using the inequality (\ref{r2}), for all $z\in \mathbb{D}_{\delta _{\nu
,m}^{(n)}}$ we obtain the inequality%
\begin{equation*}
\func{Re}\left( 1+\frac{zh_{\nu ,n}^{\prime \prime }(z)}{h_{\nu ,n}^{\prime
}(z)}\right) \geq 1-\dsum\limits_{m\geq 1}\frac{r}{\delta _{\nu ,m}^{(n)}-r},
\end{equation*}%
where $\left\vert z\right\vert =r.$ Thus, for $r\in (0,\delta _{\nu
,1}^{(n)})$ we get%
\begin{equation*}
\underset{z\in \mathbb{D}_{r}}{\inf }\left\{ \func{Re}\left( 1+\frac{zh_{\nu
,n}^{\prime \prime }(z)}{h_{\nu ,n}^{\prime }(z)}\right) \right\} =1+\frac{%
rh_{\nu ,n}^{\prime \prime }(r)}{h_{\nu ,n}^{\prime }(r)}.
\end{equation*}%
The function $H_{\nu ,n}:(0,\delta _{\nu ,1}^{(n)})\rightarrow 
\mathbb{R}
,$ defined by%
\begin{equation*}
H_{\nu ,n}(r)=1+\frac{rh_{\nu ,n}^{\prime \prime }(r)}{h_{\nu ,n}^{\prime
}(r)},
\end{equation*}%
is strictly decreasing and $\lim_{r\searrow 0}H_{\nu ,n}(r)=1>\beta $ and $%
\lim_{r\nearrow \delta _{\nu ,1}^{(n)}}H_{\nu ,n}(r)=-\infty $.
Consequently, in view of the minimum principle for harmonic functions for $%
z\in \mathbb{D}_{r_{3}}$ we have that%
\begin{equation*}
\func{Re}\left( 1+\frac{zh_{\nu ,n}^{\prime \prime }(z)}{h_{\nu ,n}^{\prime
}(z)}\right) >\beta
\end{equation*}%
if and ony if $r_{3}$ is the unique root of 
\begin{equation*}
1+\frac{rh_{\nu ,n}^{\prime \prime }(r)}{h_{\nu ,n}^{\prime }(r)}=\beta ,
\end{equation*}%
situated in $(0,\delta _{\nu ,1}^{(n)}).$
\end{proof}

With regards to Theorem \ref{t3}, we tabulate the radius of convexity for $%
f_{\nu ,n}$, $g_{\nu ,n}$ and $h_{\nu ,n}$ for a fixed $\nu =3.5,$ $%
n=0,1,2,3 $ and respectively $\beta =0$ and $\beta =0.5$. These are given in
Table 2. Also in Table 2, we see that radius of convexity is decreasing
according to the order of derivative and the order of convexity. On the
other words, from all these results we concluded that$\ r_{\beta
}^{c}(f_{\nu ,0})>r_{\beta }^{c}(f_{\nu ,1})>r_{\beta }^{c}(f_{\nu
,2})>\cdots >r_{\beta }^{c}(f_{\nu ,n})>\cdots $ for $\beta \in \lbrack 0,1)$
and $\nu >n-1,$ $n\in 
\mathbb{N}
_{0}.$ In addition to, we can write $r_{\beta _{1}}^{c}(f_{\nu ,n})<r_{\beta
_{0}}^{c}(f_{\nu ,n})$ for $0\leq \beta _{0}<\beta _{1}<1$ and $\nu >n-1,$ $%
n\in 
\mathbb{N}
_{0}.$ Same inequalities is also true for$\ r_{\beta }^{c}(g_{\nu ,n})$ and\ 
$r_{\beta }^{c}(h_{\nu ,n}).$

\begin{eqnarray*}
&&%
\begin{tabular}{l|l|l|l|l|l|l|}
\cline{2-7}
& \multicolumn{2}{l}{$\ \ \ \ \ \ \ r_{\beta }^{c}(f_{3.5,n})$} & 
\multicolumn{2}{|l}{$\ \ \ \ \ \ \ r_{\beta }^{c}(g_{3.5,n})$} & 
\multicolumn{2}{|l|}{$\ \ \ \ \ \ \ r_{\beta }^{c}(h_{3.5,n})$} \\ 
\cline{2-7}
& $\beta =0$ & $\beta =0.5$ & $\beta =0$ & $\beta =0.5$ & $\beta =0$ & $%
\beta =0.5$ \\ \hline
\multicolumn{1}{|l|}{$n=0$} & $2.7183$ & $2.0865$ & $0.5234$ & $1.1461$ & $%
6.2189$ & $3.7194$ \\ \hline
\multicolumn{1}{|l|}{$n=1$} & $1.8179$ & $1.3998$ & $1.2017$ & $0.9084$ & $%
3.7394$ & $2.2873$ \\ \hline
\multicolumn{1}{|l|}{$n=2$} & $1.0592$ & $0.8123$ & $0.8833$ & $0.6715$ & $%
1.9450$ & $1.2190$ \\ \hline
\multicolumn{1}{|l|}{$n=3$} & $0.4141$ & $0.3131$ & $0.5683$ & $0.4350$ & $%
0.7726$ & $0.4968$ \\ \hline
\end{tabular}
\\
&& \\
&&\text{\textbf{Table2.} Radii of convexity for }f_{\nu ,n},g_{\nu ,n}\text{
and }h_{\nu ,n}\text{ when }\nu =3.5
\end{eqnarray*}

For $n=0$ in the Theorem \ref{t3} we obtain the results of Baricz and Sz\'{a}%
sz \cite{Ba1}. Our results is a common generalization of these results.

\subsection{Bounds for Radii of Starlikeness and Convexity of The Functions $%
g_{\protect\nu ,n}$ and $h_{\protect\nu ,n}$}

In this subsection we consider two different functions $g_{\nu ,n}$ and $%
h_{\nu ,n}$ which are normalized forms of the Bessel function derivatives of
the first kind given by (\ref{F1}) . Here firstly our aim is to show that
the radii of univalence of these functions correspond to the radii of
starlikeness.

\begin{theorem}
\label{t4} The following statements hold:

\begin{description}
\item[a)] If $\nu >n-1,$ then $r^{\ast }(g_{\nu ,n})$ satisfies the
inequalities 
\begin{equation*}
r^{\ast }(g_{\nu ,n})<\sqrt{\frac{2(\nu -n+2)(\nu -n+1)}{\nu +2},}
\end{equation*}%
\begin{equation*}
2\sqrt{\frac{(\nu -n+2)(\nu -n+1)}{3(\nu +2)}}<r^{\ast }(g_{\nu ,n})<2\sqrt{%
\frac{1}{\frac{3(\nu +2)}{(\nu -n+2)(\nu -n+1)}-\frac{5(\nu +4)(\nu +3)}{%
3(\nu -n+4)(\nu -n+3)(\nu +2)}}}.
\end{equation*}

\item[b)] If $\nu >n-1,$ then $r^{\ast }(h_{\nu ,n})$ satisfies the
inequalities 
\begin{equation*}
r^{\ast }(h_{\nu ,n})<\frac{2(\nu -n+2)(\nu -n+1)}{\nu +2},
\end{equation*}%
\begin{equation*}
\frac{2(\nu -n+2)(\nu -n+1)}{\nu +2}<r^{\ast }(h_{\nu ,n})<\frac{2}{\frac{%
\nu +2}{(\nu -n+2)(\nu -n+1)}-\frac{3(\nu +4)(\nu +3)}{4(\nu -n+4)(\nu
-n+3)(\nu +2)}}.
\end{equation*}
\end{description}
\end{theorem}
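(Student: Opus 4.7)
My starting point is to observe, from Theorem~\ref{t2}(b,c) taken with $\beta =0$ together with the identities (\ref{r31}) and (\ref{r32}), that
\begin{equation*}
r^{\ast }(g_{\nu ,n})=\gamma _{\nu ,1}^{(n)}\qquad \text{and}\qquad r^{\ast }(h_{\nu ,n})=\delta _{\nu ,1}^{(n)},
\end{equation*}
the first positive zeros of $g'_{\nu ,n}$ and $h'_{\nu ,n}$ respectively. The existence of the Hadamard factorizations (\ref{r4}) and (\ref{r41}) was already established in the proof of Theorem~\ref{t3}, and Lemma~\ref{Le3} guarantees that $g'_{\nu ,n},\,h'_{\nu ,n}\in \mathcal{LP}$ with first positive zero not exceeding $j_{\nu ,1}^{(n)}$, so that the Euler--Rayleigh machinery applies.

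\textbf{The first (auxiliary) inequality of each part.} I plan to extract this bound directly from the Mittag--Leffler expansion
\begin{equation*}
\frac{zg'_{\nu ,n}(z)}{g_{\nu ,n}(z)}=1-\sum_{m\ge 1}\frac{2z^{2}}{(j_{\nu ,m}^{(n)})^{2}-z^{2}}
\end{equation*}
(and its linear-denominator analogue for $h_{\nu ,n}$) recorded in the proof of Theorem~\ref{t2}. Setting $z=r^{\ast }(g_{\nu ,n})$, where this expression vanishes, and bounding each summand below by its value with $(r^{\ast })^{2}$ deleted from the denominator, I get
\begin{equation*}
1=\sum_{m\ge 1}\frac{2(r^{\ast })^{2}}{(j_{\nu ,m}^{(n)})^{2}-(r^{\ast })^{2}}>2(r^{\ast })^{2}\sum_{m\ge 1}\frac{1}{(j_{\nu ,m}^{(n)})^{2}},
\end{equation*}
and inserting the closed form of the right-hand sum computed in (\ref{S5}) solves for $(r^{\ast }(g_{\nu ,n}))^{2}$ and delivers the claimed bound. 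Exactly the same step applied to $h_{\nu ,n}$ yields its analogue.

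\textbf{The Euler--Rayleigh sandwich of each part.} Writing $\tau _{k}=\sum _{m\ge 1}(\gamma _{\nu ,m}^{(n)})^{-2k}$, Newton's identities applied to the equality of the Maclaurin series (\ref{r31}) and the Hadamard product (\ref{r4}) give
\begin{equation*}
\tau _{1}=-a_{1},\qquad \tau _{2}=a_{1}^{2}-2a_{2},
\end{equation*}
where $a_{1}$ and $a_{2}$ are the coefficients of $z^{2}$ and $z^{4}$ read off from the Gamma-quotient coefficients of (\ref{r31}). Because $g'_{\nu ,n}\in \mathcal{LP}$, the classical Euler--Rayleigh inequalities
\begin{equation*}
\tau _{1}^{-1}<(\gamma _{\nu ,1}^{(n)})^{2}<\tau _{1}/\tau _{2}
\end{equation*}
then deliver, after simplification, precisely the two-sided bound in the second display of part (a). The same procedure with $\theta _{k}=\sum _{m\ge 1}(\delta _{\nu ,m}^{(n)})^{-k}$ and the coefficients of (\ref{r41}) (read off from the Maclaurin expansion preceding it) proves part (b).

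\textbf{Principal obstacle.} The only step that is not automatic is the algebraic reduction of $\tau _{2}=a_{1}^{2}-2a_{2}$ (and its analogue $\theta _{2}=b_{1}^{2}-2b_{2}$) into the compact rational expressions of $\nu $ and $n$ appearing under the square roots. I plan to carry this out by factoring $\frac{1}{16(\nu -n+2)(\nu -n+1)}$ out of both summands, exactly mirroring the reduction used in (\ref{S6}) for the zeros of $J_{\nu }^{(n)}$ itself; this collapses $\tau _{1}/\tau _{2}$ to the displayed form. All other steps (partial fraction manipulation, applying Lemma~\ref{Le3}) are routine and have already been rehearsed in earlier proofs of the paper.
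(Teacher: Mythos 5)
Your proposal follows essentially the same route as the paper: the first bound in each part comes from evaluating the Mittag--Leffler expansion of $zg_{\nu ,n}^{\prime }/g_{\nu ,n}$ (resp.\ $zh_{\nu ,n}^{\prime }/h_{\nu ,n}$) at the radius of starlikeness and invoking the Rayleigh sum (\ref{S5}), while the two-sided bounds come from the Euler--Rayleigh inequalities for the zeros of $g_{\nu ,n}^{\prime }$ and $h_{\nu ,n}^{\prime }$ via the factorizations (\ref{r4}) and (\ref{r41}); your appeal to Newton's identities is just the coefficient comparison of (\ref{p1})--(\ref{p2}) and (\ref{p11})--(\ref{p12}) in different clothing. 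One caveat on part (b): carried out correctly, your first step for $h_{\nu ,n}$ gives $1>r^{\ast }\sum_{m\geq 1}\left( j_{\nu ,m}^{(n)}\right) ^{-2}=r^{\ast }\tfrac{\nu +2}{4(\nu -n+2)(\nu -n+1)}$, hence $r^{\ast }(h_{\nu ,n})<\tfrac{4(\nu -n+2)(\nu -n+1)}{\nu +2}$, not the stated $\tfrac{2(\nu -n+2)(\nu -n+1)}{\nu +2}$; the stated version cannot be right, since it contradicts the lower bound $\rho _{1}^{-1}=\tfrac{2(\nu -n+2)(\nu -n+1)}{\nu +2}<r^{\ast }(h_{\nu ,n})$ appearing in the very next display of the same part. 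This is a slip in the paper (which mis-evaluates (\ref{S5}) by a factor of $2$ at that point), not a defect of your method, but you should not claim that "exactly the same step yields its analogue" without noting the corrected constant.
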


\begin{proof}
\textbf{a)} By using the first Rayleigh sum (\ref{S5}) and the implict
relation for $r^{\ast }(g_{\nu ,n}),$ obtained by Kreyszing and Todd \cite%
{Kr}, we get for all $\nu >n-1$ that%
\begin{equation*}
\frac{1}{\left( r^{\ast }(g_{\nu ,n})\right) ^{2}}=\dsum\limits_{m\geq 1}%
\frac{2}{\left( j_{\nu ,m}^{(n)}\right) ^{2}-\left( r^{\ast }(g_{\nu
,n})\right) ^{2}}>\dsum\limits_{m\geq 1}\frac{2}{\left( j_{\nu
,m}^{(n)}\right) ^{2}}=\frac{\nu +2}{2(\nu -n+2)(\nu -n+1)}.
\end{equation*}%
Now, by using the Euler-Rayleigh inequalities it is possible to have more
tight bounds for the radius of univalence (and starlikeness) $r^{\ast
}(g_{\nu ,n})$. We define the function $\Psi _{\nu ,n}(z)=g_{\nu ,n}^{\prime
}(z)$, where $g_{\nu ,n}^{\prime }$ defined by (\ref{r4}). Now, taking
logarithmic derivative of both sides of (\ref{r4}) for $\left\vert
z\right\vert <\gamma _{\nu ,1}^{(n)}$ we have%
\begin{equation}
\frac{\Psi _{\nu ,n}^{\prime }(z)}{\Psi _{\nu ,n}(z)}=-\dsum\limits_{m\geq 1}%
\frac{2z}{\left( \gamma _{\nu ,m}^{(n)}\right) ^{2}-z^{2}}%
=-2\dsum\limits_{m\geq 1}\dsum\limits_{k\geq 0}\frac{1}{\left( \gamma _{\nu
,m}^{(n)}\right) ^{2(k+1)}}z^{2k+1}=-2\dsum\limits_{k\geq 0}\sigma
_{k+1}z^{2k+1}  \label{p1}
\end{equation}%
where $\sigma _{k}=\sum_{m\geq 1}\left( \gamma _{\nu ,m}^{(n)}\right) ^{-k}$
is Euler-Rayleigh sum for the zeros of $\Psi _{\nu ,n}.$ Also, using (\ref%
{r31}) from the infinite sum representation of $\Psi _{\nu ,n}$ we obtain%
\begin{equation}
\frac{\Psi _{\nu ,n}^{\prime }(z)}{\Psi _{\nu ,n}(z)}=\frac{%
\dsum\limits_{m\geq 0}U_{m}z^{2m+1}}{\dsum\limits_{m\geq 0}V_{m}z^{2m}},
\label{p2}
\end{equation}%
where%
\begin{equation*}
U_{m}=\frac{2\left( -1\right) ^{m+1}\Gamma (2m+\nu +3)\Gamma (\nu -n+1)(2m+3)%
}{m!4^{m+1}\Gamma (2m-n+\nu +3)\Gamma (m+\nu +2)}
\end{equation*}%
and%
\begin{equation*}
V_{m}=\frac{\left( -1\right) ^{m}\Gamma (2m+\nu +1)\Gamma (\nu -n+1)(2m+1)}{%
m!4^{m}\Gamma (2m-n+\nu +1)\Gamma (m+\nu +1)}.
\end{equation*}%
By comparing the coefficients with the same degrees of (\ref{p1}) and (\ref%
{p2}) we obtain the Euler-Rayleigh sums%
\begin{equation*}
\sigma _{1}=\frac{3(\nu +2)}{4(\nu -n+2)(\nu -n+1)}
\end{equation*}%
and%
\begin{equation*}
\sigma _{2}=\frac{3(\nu +2)}{16(\nu -n+2)(\nu -n+1)}\left( \frac{3(\nu +2)}{%
(\nu -n+2)(\nu -n+1)}-\frac{5(\nu +4)(\nu +3)}{3(\nu -n+4)(\nu -n+3)(\nu +2)}%
\right)
\end{equation*}%
By using the Euler-Rayleigh inequalities%
\begin{equation*}
\sigma _{k}^{-\frac{1}{k}}<\left( \gamma _{\nu ,m}^{(n)}\right) ^{2}<\frac{%
\sigma _{k}}{\sigma _{k+1}}
\end{equation*}%
for $\nu >n-1$, $k\in 
\mathbb{N}
$ and $k=1$ we get the following inequality%
\begin{equation*}
\frac{4(\nu -n+2)(\nu -n+1)}{3(\nu +2)}<\left( r^{\ast }(g_{\nu ,n})\right)
^{2}<\frac{4}{\frac{3(\nu +2)}{(\nu -n+2)(\nu -n+1)}-\frac{5(\nu +4)(\nu +3)%
}{3(\nu -n+4)(\nu -n+3)(\nu +2)}}
\end{equation*}%
and it is possible to have more tighter bounds for other values of $k\in 
\mathbb{N}
.$

\textbf{b)} By using the first Rayleigh sum (\ref{S5}) and the implict
relation for $r^{\ast }(h_{\nu ,n}),$ obtained by Kreyszing and Todd \cite%
{Kr}, we get for all $\nu >n-1$ that%
\begin{equation*}
\frac{1}{r^{\ast }(h_{\nu ,n})}=\dsum\limits_{m\geq 1}\frac{1}{\left( j_{\nu
,m}^{(n)}\right) ^{2}-r^{\ast }(h_{\nu ,n})}>\dsum\limits_{m\geq 1}\frac{1}{%
\left( j_{\nu ,m}^{(n)}\right) ^{2}}=\frac{\nu +2}{2(\nu -n+2)(\nu -n+1)}.
\end{equation*}%
Now, by using the Euler-Rayleigh inequalities it is possible to have more
tight bounds for the radius of univalence (and starlikeness) $r^{\ast
}(h_{\nu ,n})$. We define the function $\Phi _{\nu ,n}(z)=h_{\nu ,n}^{\prime
}(z)$, where $h_{\nu ,n}^{\prime }$ defined by (\ref{r32}) or (\ref{r41}).
Now, taking logarithmic derivative of both sides of (\ref{r41}) we have%
\begin{equation}
\frac{\Phi _{\nu ,n}^{\prime }(z)}{\Phi _{\nu ,n}(z)}=-\dsum\limits_{m\geq 1}%
\frac{1}{\delta _{\nu ,m}^{(n)}-z}=-\dsum\limits_{m\geq
1}\dsum\limits_{k\geq 0}\frac{1}{\left( \delta _{\nu ,m}^{(n)}\right) ^{k+1}}%
z^{k}=-\dsum\limits_{k\geq 0}\rho _{k+1}z^{k},\text{ \ \ }\left\vert
z\right\vert <\delta _{\nu ,1}^{(n)}  \label{p11}
\end{equation}%
where $\rho _{k}=\sum_{m\geq 1}\left( \delta _{\nu ,m}^{(n)}\right) ^{-k}$
is Euler-Rayleigh sum for the zeros of $\Phi _{\nu ,n}.$ Also, using (\ref%
{r32}) from the infinite sum representation of $\Phi _{\nu ,n}$ we obtain%
\begin{equation}
\frac{\Phi _{\nu ,n}^{\prime }(z)}{\Phi _{\nu ,n}(z)}=\frac{%
\dsum\limits_{m\geq 0}P_{m}z^{m}}{\dsum\limits_{m\geq 0}Q_{m}z^{m}},
\label{p12}
\end{equation}%
where%
\begin{equation*}
P_{m}=\frac{\left( -1\right) ^{m+1}\Gamma (2m+\nu +3)\Gamma (\nu -n+1)(m+2)}{%
m!4^{m+1}\Gamma (2m-n+\nu +3)\Gamma (m+\nu +2)}
\end{equation*}%
and%
\begin{equation*}
Q_{m}=\frac{\left( -1\right) ^{m}\Gamma (2m+\nu +1)\Gamma (\nu -n+1)(m+1)}{%
m!4^{m}\Gamma (2m-n+\nu +1)\Gamma (m+\nu +1)}.
\end{equation*}%
By comparing the coefficients with the same degrees of (\ref{p11}) and (\ref%
{p12}) we obtain the Euler-Rayleigh sums%
\begin{equation*}
\rho _{1}=\frac{\nu +2}{2(\nu -n+2)(\nu -n+1)}
\end{equation*}%
and%
\begin{equation*}
\rho _{2}=\frac{\nu +2}{4(\nu -n+2)(\nu -n+1)}\left( \frac{\nu +2}{(\nu
-n+2)(\nu -n+1)}-\frac{3(\nu +4)(\nu +3)}{4(\nu -n+4)(\nu -n+3)(\nu +2)}%
\right)
\end{equation*}%
By using the Euler-Rayleigh inequalities%
\begin{equation*}
\rho _{k}^{-\frac{1}{k}}<\delta _{\nu ,m}^{(n)}<\frac{\rho _{k}}{\rho _{k+1}}
\end{equation*}%
for $\nu >n-1$, $k\in 
\mathbb{N}
$ and $k=1$ we get the following inequality%
\begin{equation*}
\frac{2(\nu -n+2)(\nu -n+1)}{\nu +2}<r^{\ast }(h_{\nu ,n})<\frac{2}{\frac{%
\nu +2}{(\nu -n+2)(\nu -n+1)}-\frac{3(\nu +4)(\nu +3)}{4(\nu -n+4)(\nu
-n+3)(\nu +2)}}
\end{equation*}%
and it is possible to have more tighter bounds for other values of $k\in 
\mathbb{N}
.$
\end{proof}

If we take $n=0$ in the Theorem \ref{t4} we obtain the results of Akta\c{s}
et al. \cite{Ak}. Our results is a common generalization of these results.
For special cases of parameters $\nu $ and $n,$ Theorem \ref{t4} reduces
tight lower and upper bounds for the radii of starlikeness and convexity of
many elemanter functions. For example for $\nu =\frac{3}{2}$ and $n=2$ in
Theorem \ref{t4} we have $\sqrt{\frac{2}{7}}<r^{\ast }\left( g_{\frac{3}{2}%
,2}(z)=4\sin z-4z\cos z\right) <\sqrt{\frac{3}{7}}$ and $\frac{3}{7}<r^{\ast
}\left( h_{\frac{3}{2},2}(z)=4\sqrt{z}\sin \sqrt{z}-4z\cos \sqrt{z}\right) <%
\frac{2940}{5969}.$

The next result concerning bounds for radii of convexity of functions $%
g_{\nu ,n}$ and $h_{\nu ,n}$.

\begin{theorem}
\label{t5} The following statements hold:

\begin{description}
\item[a)] If $\nu >n-1,$ then $r^{c}(g_{\nu ,n})$ satisfies the inequalities 
\begin{equation*}
\frac{2}{3}\sqrt{\frac{(\nu -n+2)(\nu -n+1)}{(\nu +2)}}<r^{c}(g_{\nu ,n})<2%
\sqrt{\frac{1}{\frac{9(\nu +2)}{(\nu -n+2)(\nu -n+1)}-\frac{25(\nu +4)(\nu
+3)}{9(\nu -n+4)(\nu -n+3)(\nu +2)}}}.
\end{equation*}

\item[b)] If $\nu >n-1,$ then $r^{c}(h_{\nu ,n})$ satisfies the inequalities 
\begin{equation*}
\frac{(\nu -n+2)(\nu -n+1)}{\nu +2}<r^{c}(h_{\nu ,n})<\frac{1}{\frac{\nu +2}{%
(\nu -n+2)(\nu -n+1)}-\frac{9(\nu +4)(\nu +3)}{16(\nu -n+4)(\nu -n+3)(\nu +2)%
}}.
\end{equation*}
\end{description}
\end{theorem}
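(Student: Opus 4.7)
The argument imitates that of Theorem~\ref{t4}, with $g_{\nu,n}^{\prime}$ and $h_{\nu,n}^{\prime}$ replaced by the auxiliary functions whose first positive zeros encode convexity rather than starlikeness. From the proofs of parts (b) and (c) of Theorem~\ref{t3}, the radius $r^{c}(g_{\nu,n})$ equals the smallest positive zero $\xi_{\nu,1}^{(n)}$ of
\begin{equation*}
\Xi_{\nu,n}(z):=\frac{d}{dz}\bigl[z\,g_{\nu,n}^{\prime}(z)\bigr]=g_{\nu,n}^{\prime}(z)+z\,g_{\nu,n}^{\prime\prime}(z),
\end{equation*}
while $r^{c}(h_{\nu,n})$ equals the smallest positive zero $\zeta_{\nu,1}^{(n)}$ of
\begin{equation*}
\Upsilon_{\nu,n}(z):=\frac{d}{dz}\bigl[z\,h_{\nu,n}^{\prime}(z)\bigr]=h_{\nu,n}^{\prime}(z)+z\,h_{\nu,n}^{\prime\prime}(z).
\end{equation*}

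Next, I would establish the Hadamard product representations
\begin{equation*}
\Xi_{\nu,n}(z)=\prod_{m\geq 1}\!\left(1-\frac{z^{2}}{\bigl(\xi_{\nu,m}^{(n)}\bigr)^{2}}\right),\qquad \Upsilon_{\nu,n}(z)=\prod_{m\geq 1}\!\left(1-\frac{z}{\zeta_{\nu,m}^{(n)}}\right).
\end{equation*}
Since (\ref{r4}) together with the argument in the proof of Theorem~\ref{t3}(b) gives $g_{\nu,n}^{\prime}\in\mathcal{LP}$, and $\mathcal{LP}$ is closed both under multiplication by $z$ and under differentiation (Laguerre's theorem), we obtain $\Xi_{\nu,n}\in\mathcal{LP}$; since $\Xi_{\nu,n}$ is even and satisfies $\Xi_{\nu,n}(0)=1$, the first product follows. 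The analogous reasoning, starting from (\ref{r41}), places $\Upsilon_{\nu,n}$ in $\mathcal{LPI}$ with positive zeros, and delivers the second product. As a fall-back, Lemma~\ref{Le3} can be invoked directly after rewriting $\Xi_{\nu,n}$ and $\Upsilon_{\nu,n}$ in terms of $J_{\nu}^{(n)}$ and $J_{\nu}^{(n+1)}$ through (\ref{F1}) and (\ref{r11}).

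With the products in hand, the Euler--Rayleigh computation used in the proof of Theorem~\ref{t4} goes through almost verbatim. The Maclaurin series of $\Xi_{\nu,n}$ is read off from (\ref{r31}) by multiplying the $m$th coefficient of $g_{\nu,n}^{\prime}$ by $2m+1$, and the series for $\Upsilon_{\nu,n}$ is obtained from (\ref{r32}) by multiplying by $m+1$. Taking logarithmic derivatives of both the product and the series representations, and matching the first two nonvanishing coefficients, I would extract closed forms for
\begin{equation*}
\tau_{k}=\sum_{m\geq 1}\bigl(\xi_{\nu,m}^{(n)}\bigr)^{-2k}\qquad\text{and}\qquad \tilde{\rho}_{k}=\sum_{m\geq 1}\bigl(\zeta_{\nu,m}^{(n)}\bigr)^{-k}\qquad(k=1,2).
\end{equation*}

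Finally, the Euler--Rayleigh inequalities
\begin{equation*}
\tau_{k}^{-1/k}<\bigl(\xi_{\nu,1}^{(n)}\bigr)^{2}<\frac{\tau_{k}}{\tau_{k+1}},\qquad \tilde{\rho}_{k}^{-1/k}<\zeta_{\nu,1}^{(n)}<\frac{\tilde{\rho}_{k}}{\tilde{\rho}_{k+1}}
\end{equation*}
applied with $k=1$ to these explicit values yield the announced bounds for $r^{c}(g_{\nu,n})$ and $r^{c}(h_{\nu,n})$. The main obstacle is the Laguerre--P\'{o}lya membership of $\Xi_{\nu,n}$ and $\Upsilon_{\nu,n}$; once that is secured, the remainder is a mechanical imitation of the Theorem~\ref{t4} argument, the only novelty being the extra factors $2m+1$ and $m+1$ produced by differentiating $z\,g_{\nu,n}^{\prime}(z)$ and $z\,h_{\nu,n}^{\prime}(z)$.
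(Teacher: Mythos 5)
Your proposal follows essentially the same route as the paper: both identify $r^{c}(g_{\nu,n})$ and $r^{c}(h_{\nu,n})$ with the first positive zeros of $\bigl(zg_{\nu,n}^{\prime}(z)\bigr)^{\prime}$ and $\bigl(zh_{\nu,n}^{\prime}(z)\bigr)^{\prime}$ (the paper's $\Delta_{\nu,n}$ and $\Theta_{\nu,n}$), place these in the Laguerre--P\'{o}lya class by closure properties, form the Hadamard products, and extract the first two Euler--Rayleigh sums by matching logarithmic derivatives of the product and series representations, exactly as in Theorem~\ref{t4}. Your extra care about closure of $\mathcal{LP}$ under multiplication by $z$ as well as differentiation is, if anything, slightly more precise than the paper's one-line justification, but the argument is the same.
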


\begin{proof}
\textbf{a)} By using the Alexander duality theorem for starlike and convex
functions we can say that the function $g_{\nu ,n}(z)$ is convex if and only
if $zg_{\nu ,n}^{\prime }(z)$ is starlike. But, the smallest positive zero
of $z\mapsto z\left( zg_{\nu ,n}^{\prime }(z)\right) ^{\prime }$ is actually
the radius of starlikeness of $z\mapsto \left( zg_{\nu ,n}^{\prime
}(z)\right) $, according to Theorem \ref{t2} and Theorem \ref{t3}.
Therefore, the radius of convexity $r^{c}(g_{\nu ,n})$ is the smallest
positive root of the equation $\left( zg_{\nu ,n}^{\prime }(z)\right)
^{\prime }=0$. Therefore from (\ref{r31}), we have%
\begin{equation*}
\Delta _{\nu ,n}(z)=\left( zg_{\nu ,n}^{\prime }(z)\right) ^{\prime
}=\sum_{m=0}^{\infty }\frac{\left( -1\right) ^{m}(2m+1)^{2}\Gamma (2m+\nu
+1)\Gamma (\nu -n+1)}{m!4^{m}\Gamma (2m-n+\nu +1)\Gamma (m+\nu +1)}z^{2m}.
\end{equation*}

Since the function $g_{\nu ,n}(z)$ belongs to the Laguerre-P\'{o}lya class
of entire functions and $\mathcal{LP}$ is closed under differentiation, we
can say that the function $\Delta _{\nu ,n}(z)\in \mathcal{LP}$. Therefore,
the zeros of the function $\Delta _{\nu ,n}$ are all real. Suppose that $%
d_{\nu ,m}^{(n)}$ are the zeros of the function $\Delta _{\nu ,n}$. Then the
function $\Delta _{\nu ,n}$ has the infinite product representation as
follows:%
\begin{equation}
\Delta _{\nu ,n}(z)=\dprod\limits_{m\geq 1}\left( 1-\frac{z^{2}}{\left(
d_{\nu ,m}^{(n)}\right) ^{2}}\right) .  \label{p13}
\end{equation}%
By taking the logarithmic derivative of (\ref{p13}) we get%
\begin{equation}
\frac{\Delta _{\nu ,n}^{\prime }(z)}{\Delta _{\nu ,n}(z)}=-2\dsum\limits_{m%
\geq 1}\frac{z}{\left( d_{\nu ,m}^{(n)}\right) ^{2}-z^{2}}%
=-2\dsum\limits_{m\geq 1}\dsum\limits_{k\geq 0}\frac{1}{\left( d_{\nu
,m}^{(n)}\right) ^{2(k+1)}}z^{2k+1}=-2\dsum\limits_{k\geq 0}\kappa
_{k+1}z^{2k+1},\text{ \ \ }\left\vert z\right\vert <d_{\nu ,1}^{(n)}
\label{p14}
\end{equation}%
where $\kappa _{k}=\sum_{m\geq 1}\left( d_{\nu ,m}^{(n)}\right) ^{-k}$ is
Euler-Rayleigh sum for the zeros of $\Delta _{\nu ,n}.$ On the other hand,
by considering infinite sum representation of $\Delta _{\nu ,n}(z)$ we obtain%
\begin{equation}
\frac{\Delta _{\nu ,n}^{\prime }(z)}{\Delta _{\nu ,n}(z)}=\frac{%
\dsum\limits_{m\geq 0}X_{m}z^{2m+1}}{\dsum\limits_{m\geq 0}Y_{m}z^{2m}},
\label{p15}
\end{equation}%
where%
\begin{equation*}
X_{m}=\frac{2\left( -1\right) ^{m+1}\Gamma (2m+\nu +3)\Gamma (\nu
-n+1)(2m+3)^{2}}{m!4^{m+1}\Gamma (2m-n+\nu +3)\Gamma (m+\nu +2)}
\end{equation*}%
and%
\begin{equation*}
Y_{m}=\frac{\left( -1\right) ^{m}\Gamma (2m+\nu +1)\Gamma (\nu
-n+1)(2m+1)^{2}}{m!4^{m}\Gamma (2m-n+\nu +1)\Gamma (m+\nu +1)}.
\end{equation*}%
By comparing the coefficients of (\ref{p14}) and (\ref{p15}) we obtain%
\begin{equation*}
\kappa _{1}=\frac{9(\nu +2)}{4(\nu -n+2)(\nu -n+1)}
\end{equation*}%
and%
\begin{equation*}
\kappa _{2}=\frac{9(\nu +2)}{16(\nu -n+2)(\nu -n+1)}\left( \frac{9(\nu +2)}{%
(\nu -n+2)(\nu -n+1)}-\frac{25(\nu +4)(\nu +3)}{9(\nu -n+4)(\nu -n+3)(\nu +2)%
}\right)
\end{equation*}%
By using the Euler-Rayleigh inequalities%
\begin{equation*}
\kappa _{k}^{-\frac{1}{k}}<\left( d_{\nu ,m}^{(n)}\right) ^{2}<\frac{\kappa
_{k}}{\kappa _{k+1}}
\end{equation*}%
for $\nu >n-1$, $k\in 
\mathbb{N}
$ and $k=1$ we get the following inequality%
\begin{equation*}
\frac{4(\nu -n+2)(\nu -n+1)}{9(\nu +2)}<\left( r^{c}(g_{\nu ,n})\right) ^{2}<%
\frac{4}{\frac{9(\nu +2)}{(\nu -n+2)(\nu -n+1)}-\frac{25(\nu +4)(\nu +3)}{%
9(\nu -n+4)(\nu -n+3)(\nu +2)}}
\end{equation*}%
and it is possible to have more tighter bounds for other values of $k\in 
\mathbb{N}
.$

\textbf{b)} By using the same procedure as in the previous proof we can say
that the radius of convexity $r^{c}(h_{\nu ,n})$ is the smallest positive
root of the equation $\left( zh_{\nu ,n}^{\prime }(z)\right) ^{\prime }=0$
according to Theorem \ref{t3}. From (\ref{r32}), we have%
\begin{equation}
\Theta _{\nu ,n}(z)=\left( zh_{\nu ,n}^{\prime }(z)\right) ^{\prime
}=\sum_{m=0}^{\infty }\frac{\left( -1\right) ^{m}(m+1)^{2}\Gamma (2m+\nu
+1)\Gamma (\nu -n+1)}{m!4^{m}\Gamma (2m-n+\nu +1)\Gamma (m+\nu +1)}z^{m}.
\label{p151}
\end{equation}%
Moreover, we know $h_{\nu ,n}(z)$ belongs to the Laguerre-P\'{o}lya class of
entire functions and $\mathcal{LP}$, consequently $\Theta _{\nu ,n}(z)\in 
\mathcal{LP}$. On the other words, the zeros of the function $\Theta _{\nu
,n}$ are all real. Assume that $l_{\nu ,m}^{(n)}$ are the zeros of the
function $\Theta _{\nu ,n}$. In this case, the function $\Theta _{\nu ,n}$
has the infinite product representation as follows:%
\begin{equation}
\Theta _{\nu ,n}(z)=\dprod\limits_{m\geq 1}\left( 1-\frac{z^{2}}{\left(
l_{\nu ,m}^{(n)}\right) ^{2}}\right) .  \label{p16}
\end{equation}%
By taking the logarithmic derivative of both sides of (\ref{p16}) for \ $%
\left\vert z\right\vert <l_{\nu ,1}^{(n)}$ we have%
\begin{equation}
\frac{\Theta _{\nu ,n}^{\prime }(z)}{\Theta _{\nu ,n}(z)}=-\dsum\limits_{m%
\geq 1}\frac{1}{l_{\nu ,m}^{(n)}-z}=-\dsum\limits_{m\geq
1}\dsum\limits_{k\geq 0}\frac{1}{\left( l_{\nu ,m}^{(n)}\right) ^{k+1}}%
z^{k}=-\dsum\limits_{k\geq 0}\omega _{k+1}z^{k}  \label{p17}
\end{equation}%
where $\omega _{k}=\sum_{m\geq 1}\left( l_{\nu ,m}^{(n)}\right) ^{-k}$. In
addition, by using the derivative of infinite sum representation considering
infinite sum representation of (\ref{p151}) we obtain%
\begin{equation}
\frac{\Theta _{\nu ,n}^{\prime }(z)}{\Theta _{\nu ,n}(z)}=\dsum\limits_{m%
\geq 0}T_{m}z^{m}\diagup \dsum\limits_{m\geq 0}S_{m}z^{m},  \label{p18}
\end{equation}%
where%
\begin{equation*}
T_{m}=\frac{\left( -1\right) ^{m+1}\Gamma (2m+\nu +3)\Gamma (\nu
-n+1)(m+2)^{2}}{m!4^{m+1}\Gamma (2m-n+\nu +3)\Gamma (m+\nu +2)}
\end{equation*}%
and%
\begin{equation*}
S_{m}=\frac{\left( -1\right) ^{m}\Gamma (2m+\nu +1)\Gamma (\nu -n+1)(m+1)^{2}%
}{m!4^{m}\Gamma (2m-n+\nu +1)\Gamma (m+\nu +1)}.
\end{equation*}%
By comparing the coefficients of (\ref{p17}) and (\ref{p18}) we get%
\begin{equation*}
\omega _{1}=\frac{\nu +2}{(\nu -n+2)(\nu -n+1)}
\end{equation*}%
and%
\begin{equation*}
\omega _{2}=\frac{\nu +2}{(\nu -n+2)(\nu -n+1)}\left( \frac{\nu +2}{(\nu
-n+2)(\nu -n+1)}-\frac{9(\nu +4)(\nu +3)}{16(\nu -n+4)(\nu -n+3)(\nu +2)}%
\right)
\end{equation*}%
By using the Euler-Rayleigh inequalities%
\begin{equation*}
\omega _{k}^{-\frac{1}{k}}<l_{\nu ,m}^{(n)}<\frac{\omega _{k}}{\omega _{k+1}}
\end{equation*}%
for $\nu >n-1$, $k\in 
\mathbb{N}
$ and $k=1$ we get the following inequality%
\begin{equation*}
\frac{(\nu -n+2)(\nu -n+1)}{\nu +2}<r^{c}(h_{\nu ,n})<\frac{1}{\frac{\nu +2}{%
(\nu -n+2)(\nu -n+1)}-\frac{9(\nu +4)(\nu +3)}{16(\nu -n+4)(\nu -n+3)(\nu +2)%
}}
\end{equation*}

and it is possible to have more tighter bounds for other values of $k\in 
\mathbb{N}
.$
\end{proof}

If we take $n=0$ in the Theorem \ref{t5} we obtain the results of Akta\c{s}
et al. \cite{Ak1}. For special case $n=1,2,3$ we obtain following result.

\begin{corollary}
\label{c1} The following inequalities hold: 
\begin{eqnarray*}
\frac{2}{3}\sqrt{\frac{\nu (\nu +1)}{(\nu +2)}} &<&r^{c}(g_{\nu ,1})<2\sqrt{%
\frac{1}{\frac{9(\nu +2)}{\nu (\nu +1)}-\frac{25(\nu +4)(\nu +3)}{9(\nu
+3)(\nu +2)^{2}}}}\text{\ \ \ }\nu >0, \\
\text{ }\frac{\nu (\nu +1)}{(\nu +2)} &<&r^{c}(h_{\nu ,1})<\frac{1}{\frac{%
\nu +2}{\nu (\nu +1)}-\frac{9(\nu +4)(\nu +3)}{16(\nu +3)(\nu +2)^{2}}}\text{
\ \ }\nu >0, \\
\frac{2}{3}\sqrt{\frac{\nu (\nu -1)}{(\nu +2)}} &<&r^{c}(g_{\nu ,2})<2\sqrt{%
\frac{1}{\frac{9(\nu +2)}{\nu (\nu -1)}-\frac{25(\nu +4)(\nu +3)}{9(\nu
+1)(\nu +2)^{2}}}}\text{ \ \ }\nu >1, \\
\text{ }\frac{\nu (\nu -1)}{(\nu +2)} &<&r^{c}(h_{\nu ,2})<\frac{1}{\frac{%
\nu +2}{\nu (\nu -1)}-\frac{9(\nu +4)(\nu +3)}{16(\nu +1)(\nu +2)^{2}}}\text{
\ \ }\nu >1, \\
\frac{2}{3}\sqrt{\frac{(\nu -2)(\nu -1)}{(\nu +2)}} &<&r^{c}(g_{\nu ,3})<2%
\sqrt{\frac{1}{\frac{9(\nu +2)}{(\nu -2)(\nu -1)}-\frac{25(\nu +4)(\nu +3)}{%
9\nu (\nu +1)(\nu +2)}}}\text{ \ \ }\nu >2, \\
\frac{(\nu -2)(\nu -1)}{(\nu +2)} &<&r^{c}(h_{\nu ,3})<\frac{1}{\frac{\nu +2%
}{(\nu -2)(\nu -1)}-\frac{9(\nu +4)(\nu +3)}{16\nu (\nu +1)(\nu +2)}}\text{
\ \ }\nu >2.
\end{eqnarray*}
\end{corollary}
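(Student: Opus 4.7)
The plan is to specialize Theorem~\ref{t5} to $n=1,2,3$; no fresh ideas are required, only routine bookkeeping. The two inequality pairs in Theorem~\ref{t5} involve the four shifted quantities $\nu-n+1$, $\nu-n+2$, $\nu-n+3$, $\nu-n+4$, together with the constant $\nu+2$, under the domain constraint $\nu > n-1$. For each $n\in\{1,2,3\}$ these shifted quantities collapse to simple consecutive integers offset from $\nu$, so the whole argument reduces to rewriting the two general formulas with the appropriate substitutions.

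Concretely, I would proceed in three essentially identical steps. For $n=1$, replace $\nu-n+j$ by $\nu-1+j$ throughout, producing $(\nu-n+2)(\nu-n+1)=\nu(\nu+1)$ and $(\nu-n+4)(\nu-n+3)(\nu+2)=(\nu+3)(\nu+2)^{2}$, with the constraint becoming $\nu>0$. For $n=2$, the analogous substitutions give $(\nu-n+2)(\nu-n+1)=\nu(\nu-1)$ and $(\nu-n+4)(\nu-n+3)(\nu+2)=(\nu+1)(\nu+2)^{2}$, and the constraint $\nu>1$. For $n=3$ they give $(\nu-n+2)(\nu-n+1)=(\nu-1)(\nu-2)$ and $(\nu-n+4)(\nu-n+3)(\nu+2)=\nu(\nu+1)(\nu+2)$, and the constraint $\nu>2$. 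Carrying out the substitution in part~(a) of Theorem~\ref{t5} produces the three bounds for $r^{c}(g_{\nu,n})$ listed in Corollary~\ref{c1}, and repeating the same substitution in part~(b) produces the three bounds for $r^{c}(h_{\nu,n})$.

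There is no substantive obstacle in this proof: both the Euler--Rayleigh machinery and the Laguerre--P\'{o}lya structure were already fully exploited in the proof of Theorem~\ref{t5}, so at this point the corollary is an immediate instantiation. The only mild care required is algebraic, namely the cancellation pattern in $(\nu-n+4)(\nu-n+3)(\nu+2)$ when one of the shifted factors happens to equal $\nu+2$ (which occurs for $n=1,2$ but not for $n=3$), and the transcription of the blanket restriction $\nu>n-1$ as the three explicit inequalities $\nu>0$, $\nu>1$, $\nu>2$ attached to each pair of bounds.
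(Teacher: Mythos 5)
Your proposal is correct and matches the paper exactly: the corollary is obtained by direct substitution of $n=1,2,3$ into both parts of Theorem~\ref{t5}, and your computed products $(\nu-n+2)(\nu-n+1)$ and $(\nu-n+4)(\nu-n+3)(\nu+2)$ as well as the constraints $\nu>0,1,2$ all agree with the stated bounds. The paper offers no further argument beyond this instantiation, so nothing is missing.
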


\end{document}